\newtheorem{prop}{Proposition}[section]
\newtheorem{theorem}[prop]{Theorem}
\newtheorem{lem}[prop]{Lemma}
\theoremstyle{definition}
\newtheorem{definition}[prop]{Definition}
\newtheorem{example}[prop]{Example}
\newtheorem*{rem*}{Remark}
\newtheorem*{notation*}{Notation}
\newtheorem*{example*}{Example}
\newtheorem*{definition*}{Definition}
\newcommand{\bR}{\mathbb{R}}
\newcommand{\cB}{\mathcal{B}}
\newcommand{\cF}{\mathcal{F}}
\newcommand{\End}{\mathrm{End}}
\newcommand{\Pf}{\mathrm{P\! f}}
\newcommand{\vol}{\mathrm{vol}}
\newcommand{\cz}{\mathbb{Z}}
\newcommand{\verti}{\mathrm{vert}}
\newcommand{\hor}{\mathrm{hor}}
\newcommand{\codim}{\mathrm{codim}}
\newcommand{\out}{\mathrm{out}}
\newcommand{\SE}{{S}\! E}
\newcommand{\SM}{{S}\! M}
\newcommand{\cT}{\mathcal{T}}
\newcommand{\Rz}{\mathbb{R}}
\newcommand{\kk}{\mathfrak{k}}
\newcommand{\SVs}{\mathrm{S}V^*}
\newcommand{\Sk}{\mathrm{Sk}}
\newcommand{\cD}{\mathcal{D}}
\newcommand{\cP}{\mathcal{P}}
\newcommand{\Con}{\mathrm{Con}}
\newcommand{\rk}{\mathrm{rk}}
\begin{document}

\title{Higher transgressions of the Pfaffian}
\author{Sergiu Moroianu}
\address{Institutul de Matematic\u{a} al Academiei Rom\^{a}ne\\
P.O. Box 1-764\\
014700 Bucharest\\
Romania}
\email{moroianu@alum.mit.edu}
\date{\today}

\begin{abstract} 
We define transgressions of arbitrary order, with respect to families 
of unit-vector fields indexed by a polytope, for the Pfaffian of
metric connections for semi-Riemannian metrics on vector bundles. We apply this
formula to compute the Euler characteristic of a Riemannian polyhedral manifold
in the spirit
of Chern's differential-geometric proof of the generalized Gauss-Bonnet formula
on closed manifolds 
and on manifolds-with-boundary. As a consequence, we derive an identity for
spherical and 
hyperbolic polyhedra linking the volumes of faces of even
codimension and the measures of outer angles.
\end{abstract}
\maketitle

\section{Introduction}
The classical Gauss-Bonnet theorem computes the Euler characteristic of a closed
surface $(M,g)$ by integrating on $M$ the Gaussian curvature $\mathfrak{k}_g$.
When $M$
has a smooth boundary, one must add a correction term involving the average of
the geodesic curvature of the boundary $\partial M\hookrightarrow M$. If $M$ has
\emph{corners}, i.e., 
the boundary itself has isolated singular points, then the exterior angle of
these corners must also be taken into account. This general formula reads
\[
2\pi \chi(M) = \int_M \mathfrak{k}_g\nu_g - \int_{\partial M} a \cdot l_g
+\sum_{p}\angle^\out(p)
\]
where $a:\partial M\to \Rz$ is the
geodesic curvature function with respect to the outer normal, 
$l_g$ is the induced volume form on $\partial M$, and $\angle^\out(p)$ is the
outer angle at a corner $p$.
This outstanding formula has been generalized to arbitrary dimensions 
by a sequence of authors including H.\ Hopf, H.\ Weyl, C.\ Allendoerfer, 
A.\ Weil, and S.S.\ Chern. 

In the present paper we extend the
Gauss-Bonnet formula of Allendoerfer and Weil \cite{AW} to general compact Riemannian 
polyhedral manifolds (theorems \ref{awt} and \ref{awo}). 

The outline of the paper is as follows. In the first two sections we 
review the Pffafian of the curvature
using the language of double forms. In Section \ref{Spm} we 
define smooth polyhedral manifolds and
polyhedral complexes, and study their properties with respect to 
integration of forms. The category
of polyhedral complexes allows one to bundle together the outer cones of faces of Riemannian 
polyhedral manifolds, the natural locally trivial bundles of spherical polytopes 
where the contributions of the faces are localized. 
Starting from Chern's construction \cite{Ch1} of a transgression form, 
we introduce in Section \ref{sec2} 
higher transgressions for the Pfaffian form on vector bundles endowed 
with a nondegenerate bilinear form and a compatible connection. We show that
the exterior differential of these transgressions can be computed as a
sum of lower-order transgressions.
We then apply our abstract transgression theorem in Section \ref{sec3} to the
case of Riemannian polyhedral manifolds. This formula has been proved with
entirely different methods by  Allendoerfer and Weil \cite{AW} for a particular 
class of polyhedral manifolds that we call \emph{regular}. For regular polyhedral manifolds,
the Gauss-Bonnet formula in the even-dimensional case
follows by iterating the transgression formula on the boundary strata.
In the general case, we use a global polyhedral complex to transfer the transgressions
onto the outer cones via the polyhedral Stokes formula. The 
odd-dimensional case is reduced to the even case by analyzing
the Riemannian product with an interval. 

In the final section, we particularize our formula to space forms:
\begin{theorem}\label{th1}
Let $M$ be a $d$-dimensional compact polyhedral manifold of constant sectional
curvature
$\kk$, with totally geodesic faces. Then
\begin{equation}\label{gbcc}
\frac{\chi(M)}{2}=\sum_{j\geq
0}\sum_{Y\in\cF^{(d-2j)}}\kk^{j}\frac{\vol_{2j}(Y)}{\vol(S^{2j})}\frac{\angle^\out
Y}{\vol(S^{d-2j-1})}
\end{equation}
where $\cF^{(d-2j)}$ is the set of faces of $M$ of dimension $2j$, $S^k$ is the
standard unit sphere in $\bR^{k+1}$, 
and $\angle^\out Y$ is the measure of the outer solid angle at the face $Y$. 
\end{theorem}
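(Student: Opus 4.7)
The plan is to specialize the general Gauss-Bonnet formula for Riemannian polyhedral manifolds (Theorem \ref{awt} in the even-dimensional case and Theorem \ref{awo} in the odd case) to the setting of constant sectional curvature with totally geodesic faces, and to show that every term in the abstract sum over faces collapses to the geometrically meaningful quantity on the right-hand side of \eqref{gbcc}.

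The starting observation is that on a space form of curvature $\kk$ the Riemann tensor is algebraically determined by the metric, $R(X,Y)Z=\kk\bigl(\langle Y,Z\rangle X-\langle X,Z\rangle Y\bigr)$. Consequently the Pfaffian of the $j$-fold wedge of the curvature double form, evaluated on a $2j$-dimensional subspace, is $\kk^{j}$ times the volume density times a universal combinatorial constant; in the normalization adopted in the paper this constant is exactly $1/\vol(S^{2j})$. In particular, when $d$ is even the bulk integral $\int_{M}\Pf(\Omega)$ appearing in Theorem \ref{awt} contributes precisely the $j=d/2$ term of \eqref{gbcc}, with the outer angle at the top stratum interpreted as the full solid angle.

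The main work is the analysis of the face contributions. Totally-geodesic faces have vanishing second fundamental form, so each face $Y$ is itself a space form of curvature $\kk$, and the intrinsic curvature coincides with the pull-back of $R$. In the transgression formula of Section \ref{sec2}, applied to the family of unit normals parametrized by the outer cone of $Y$, the vanishing of $\II$ eliminates all mixed intrinsic/extrinsic monomials, and the transgression factors as a product of the intrinsic Pfaffian of $Y$ and a purely spherical transgression on the outer cone. Integrating the first factor over $Y$ produces $\kk^{j}\vol_{2j}(Y)/\vol(S^{2j})$ with $2j=\dim Y$ by the constant-curvature computation above; integrating the second factor over the outer cone produces $\angle^\out Y/\vol(S^{d-2j-1})$, which is by definition the normalized measure of the outer solid angle. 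Odd-codimensional faces drop out because the normal transgression then requires a Pfaffian of odd rank and therefore vanishes, explaining why \eqref{gbcc} indexes only even-dimensional faces. The odd-dimensional case, where no bulk Pfaffian term exists, is handled by the $M\times[0,1]$ reduction described in Section \ref{sec3}, which returns the same formula.

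The principal obstacle I anticipate is making the product factorization on outer cones fully rigorous. The transgression from Section \ref{sec2} is a polynomial expression mixing $R$, $\II$, and the radial and angular coordinates of the cone, and one must track precisely which monomials survive once $\II\equiv 0$ and $R$ is replaced by its space-form value. Once this bookkeeping is done and Fubini is applied to decompose the outer-cone integral into a spherical polytope times its radial cone, the volume normalizations in \eqref{gbcc} fall out of the constant-curvature computation, and the theorem follows by summing the contributions provided by Theorems \ref{awt} and \ref{awo}.
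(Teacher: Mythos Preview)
Your proposal is correct and follows essentially the same route as the paper: specialize Theorems \ref{awt} and \ref{awo} by setting the second fundamental form $A=0$ (so only the $(A^*)^0$ monomials survive, forcing $l=2k$ and hence even face dimension) and $R^Y=\tfrac{\kk}{2}g_Y^2$, then compute $\cB_Y\bigl[(R^Y)^j\bigr]=\kk^j\tfrac{(2j)!}{2^j}|dg_Y|$ and match the remaining combinatorial constants against $\vol(S^{2j})$ and $\vol(S^{d-2j-1})$. The constant-matching you flag as the principal obstacle is indeed the only real work, and the paper likewise leaves that verification implicit.
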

By convention, for $2j=d$, both the volume of $S^{-1}$ and the exterior angle of
the interior face of $M$ are defined to be $1$.
We deduce from this theorem identities for hyperbolic polyhedra
involving the volumes of even-dimensional faces and their outer angles,
including an extension to the noncompact case where some - or all - vertices are ideal.

\subsubsection*{Historical note} For submanifolds in $\Rz^n$, the Gauss-Bonnet
formula was stated and proved by Hopf \cite{Hop} for hypersurfaces,
and by Allendoerfer \cite{Alle} for submanifolds of arbitrary codimension. 
Allendoerfer and Weil \cite{AW} derived their formula on Riemannian polyhedra 
mainly as a tool for deducing the Gauss-Bonnet on closed Riemannian manifolds 
without assuming the existence of an isometric embedding in Euclidean space 
(which we now know to exist by Nash's embedding theorem \cite{nas}, 
but was unknown at that time). Their proof is indirect, based on a series of
results: a triangulation theorem for polyhedral manifolds, an additivity result
for the geometric side of the formula, a proof for simplices
embedded in some Euclidean space using Weyl's tube formula, an embedding theorem
for analytic simplices, predating Nash, due to Cartan, and the Whitney analytic
approximation result.

In a series of papers \cite{Ch1}, \cite{Ch2}, Chern gave an entirely different
proof of the Gauss-Bonnet formula, based on his transgression form for the
Pfaffian lifted to the sphere bundle. His construction yields simultaneously 
the necessary correction term on manifolds with boundary, i.e, manifolds with 
corners of codimension $1$. 

We extend here Chern's method to transgressions of higher orders (transgressions
of transgressions), and use these transgressions to prove the Allendoerfer-Weil 
formula for a more general class of polyhedral manifolds than that of \cite{AW}. 
In particular, the additivity of the geometric term in the generalized Gauss-Bonnet 
becomes a corollary of our proof. We close in this way a circle of ideas going back 
to Gauss and Bonnet, Hopf and Weyl, Allendoerfer and Weil and, last but not least, 
S.S.\ Chern. This paper is a tribute to those great mathematicians from the past.

\section{Tensor calculus and the Pfaffian}

We fix in this section the notation concerning vector bundles with metric
connections, and we develop a formalism for multiplying vector-valued forms. 
Such a formalism was already used by Walter \cite{wal} in his generalized
Allendoerfer-Weil formula for locally convex subsets in a Riemannian manifold,
and also by Albin \cite{Alb}.

\subsection{The Pfaffian}

Let $(V,h)$ be a $2n$-dimensional real vector space endowed with a nondegenerate
symmetric bilinear pairing 
of signature $(k,2n-k)$. This means that we can find orthogonal bases
$\{e_1,\ldots,e_{2n}\}$ with
\begin{align*}
h(e_1,e_1)=\ldots=h(e_k,e_k)=1,&& h(e_{k_1},e_{k+1})=\ldots=h(e_{2n},e_{2n})=-1.
\end{align*}
If moreover $V$ is oriented, the \emph{volume form} defined by $h$ is the unique
$2n$-form
$\vol_h\in\Lambda^{2n}V^*$ which takes the value
$1$ on any such basis. This form defines an isomorphism
\begin{align*}
\bR\to\Lambda^{2n} V,&& 1\mapsto \vol_h.
\end{align*}
The inverse $\cB_h:\Lambda^{2n} V\to \bR$ of this isomorphism is called the
\emph{Berezin integral}
with respect to $h$.

Any skew-symmetric endomorphism $A\in\End^-(V)$ determines a $2$-form
$\omega_A\in\Lambda^2V^*$
by
\begin{equation*}
\omega_A(u,v)=h(u,Av).
\end{equation*}
The $n^{\mathrm{th}}$ power of the
$2$-form $\omega_A$ is a multiple of 
$\vol_h$. Define the \emph{Pfaffian} of $A$ with respect to $h$ by
\[
\Pf(A)= \tfrac{1}{n!} \cB_h\left[(\omega_A)^n\right].
\]

\begin{example}
Let $V=\bR^{2n}$ with its euclidean metric and let $\{e_1,\ldots,e_{2n}\}$ be
the standard basis. 
The Pfaffian is a polynomial with integer coefficients (in fact, $\pm 1$) in the
$n(2n-1)$ independent entries of $A$.
It is well-known, and easy to prove, that $\Pf(A)^2=\det(A)$.
\begin{enumerate}
\item For $n=1$, every skew-symmetric matrix takes the form $A=\begin{bmatrix}
0&a\\-a&0\end{bmatrix}$. Then 
\begin{align*}
\omega_A=a e^1\wedge e^2,&&\Pf(A)=a.
\end{align*}
\item For $n=2$ and
$A=\begin{bmatrix}0&a&b&c\\-a&0&d&f\\-b&-d &0 &g\\
-c&-f & -g&0 \end{bmatrix}$,
\begin{align*}\omega_A={}& ae^1\wedge e^2 +be^1\wedge e^3+ce^1\wedge e^4 + d
e^2\wedge e^3
+ f e^2\wedge e^4 +ge^3\wedge e^4,\\ 
\Pf(A)={}& ag - bf+ cd.
\end{align*}
\end{enumerate}
\end{example}

\subsection{Vector-valued forms}
Let $M$ be a smooth manifold, and $E_1, E_2, E_3$ real vector bundles over
$M$. To every 
linear map $p:E_1\otimes E_2\to E_3$ we associate a ``product'' on spaces of
vector-valued forms, 
i.e., a bilinear map
\begin{align*}
P:\Omega^*(M,E_1)\times \Omega^*(M,E_2) \to{}& \Omega^*(M,E_3),\\ 
(\alpha_1\otimes
s_1)\times(\alpha_2\otimes s_2)\mapsto{}& \alpha_1\wedge\alpha_2
\otimes p(s_1,s_2).
\end{align*}
\begin{itemize}
\item A first example of such a product arises for
$E_1= \bR$, $E_2=E_3=E$ and $p:\bR\otimes E\to E$ the canonical isomorphism. We
recover the $\Omega^*(M)$-module structure of $\Omega^*(M,E)$. 
\item
When $E_1=\End(E)$, $E_2=E_3=E$, and $p$ is the tautological pairing
$\End(E)\times E\to E$,
we recover the action of endomorphism-valued forms on $E$-valued forms.
\item
A large class of examples arises when
$E_1=E_2=E_3=E$ are bundles of $\bR$-algebras, and $p$ is the algebra product of
$E$. 
\item As a particular case of the previous example, let $E$ be the bundle of
exterior algebras of a vector bundle $V$. Set
\[\Omega^{u,v}(M,V):=\Omega^u(M,\Lambda^v V^*).\]
When $M$ and $V$ are clear from the context we will suppress them, using
simply the notation $\Omega^{u,v}$. We get a bi-graded algebra structure
on the space of \emph{double forms}
$\Omega^*(M,\Lambda^*V)=\oplus_{u,v}\Omega^{u,v}$.
\item Another particular case: take $E$ to be the endomorphism bundle of some
vector bundle $V$. 
We get a composition product on the space of \emph{endomorphism valued forms}
$\Omega^*(M,\End(V))$.
\end{itemize}

The two last two examples may lead to confusion when $V$ is additionally
endowed with a nondegenerate
symmetric bilinear pairing $h$. In that case, there is 
an identification of the space of $h$-antisymmetric endomorphisms $\End^-(V)$
with $\Lambda^2(V^*)$, given by
\begin{align}\label{omegaa}
\End^-(V)\ni A\mapsto \omega_A\in \Lambda^2(V^*),&& \omega_A(u,v)=h(u,Av).
\end{align}
So there exist two different ``product'' maps on $\Omega^*(M,\Lambda^2(V^*))$:
one is the ``exterior product" taking values in 
$\Omega^*(M,\Lambda^4(V^*))$, the other one is the ``composition product''
obtained by identifying $\Omega^*(M,\Lambda^2(V^*))$ with
$\Omega^*(M,\End^-(V))$ and then using the product of endomorphisms, hence
taking values in $\Omega^*(M,\End(V))$. 

For simplicity, in the sequel we shall write $\alpha \beta$ for the ``product''
$P(\alpha,\beta)\in\Omega^*(M,E_3)$.

\section{The curvature tensor as a double form}
For every vector bundle $E\to M$ with connection $\nabla$ and for every $k\geq
0$ we denote by $d^\nabla:\Omega^k(M,E)\to\Omega^{k+1}$ the
exterior differential twisted by $\nabla$ on $E$-valued forms. Recall that on a
tensor
product $S=\alpha\otimes s$, where $\alpha\in\Omega^k$ and $s\in\Gamma(E)$ are a
locally-defined $k$-form, respectively a section in $E$, $d^\nabla$ is given by
\[d^\nabla (S)= d\alpha\otimes s +(-1)^k \alpha \nabla s.
\]
The first-order differential operator $d^\nabla$ 
is a derivation on the graded $\Omega^*(M)$-module
$\Omega^*(M,E)$, in the sense that for any $\beta\in\Omega^s(M)$ and $S\in
\Omega^*(M,E)$, the Leibniz rule holds:
\[d^\nabla (\beta S)= (d\beta) S+(-1)^s \beta d^\nabla S.
\]

The composition of two successive operators $d^\nabla$ is a $0$-th order
differential operator, identified with an element $R^\nabla\in
\Omega^2(M,\End(E))$. In other words, for all $S\in\Omega^*(M,E)$ we have 
\[d^\nabla(d^\nabla S)= R^\nabla S
\]
where the product is induced from the canonical pairing $\End(E)\times E\to E$.
The tensor
$R^\nabla\in \Omega^2(M,\End(E))$ is called the \emph{curvature endomorphism} of
$\nabla$.

When $E$ has a nondegenerate symmetric bilinear pairing $h$ preserved by
$\nabla$, the
curvature endomorphism $R^\nabla$ will be skew-symmetric. As in \eqref{omegaa},
this endomorphism
determines via $h$ a double form
\begin{align*}
\omega_R\in\Omega^{2,2}=\Omega^2(M,\Lambda^2
E^*),&&\omega_R(X,Y)(e_1,e_2)=h(e_1,R^\nabla(X,Y)e_2)
\end{align*}
called the \emph{curvature form}. When no confusion can arise regarding $h$, we
will continue to denote the form 
$\omega_R$ by $R^\nabla$.

\subsection{The Pfaffian of the curvature}

For any real vector bundle $E\to M$ of even rank $2n$
endowed with a nondegenerate pairing $h$ and a compatible connection $\nabla$,
we are ready to define the Pfaffian form $\Pf_h(\nabla)$. When $E$ is oriented, 
multiplication by the volume form of $h$ defines for every $d\geq 0$ an
isomorphism
\begin{align*}
\Omega^d(M)\ni \alpha \mapsto \alpha\otimes\vol_h\in \Omega^{d,2n}.
\end{align*}
The inverse of this isomorphism is called the Berezin integral $\cB_h$ with
respect to $h$.
The \emph{Pfaffian form} of $\nabla$ is defined as
\[
\Pf_h(R^\nabla):=\tfrac{1}{n!}\cB_h(\omega_R^n)\in\Omega^{2n}(M)=\Omega^{2n,0}.
\]
When $h$ is implicit from the context, for simplicity we write $\Pf(\nabla)$ for $\Pf_h(R^\nabla)$.
Clearly, the Pfaffian form vanishes identically if the dimension of the base $M$
is smaller than the rank of $E$.
Even if $E$ is not oriented, we can still define the Pfaffian \emph{density}
of $R$, by using the above definition locally with respect to any of the two
possible orientations. 
This is well-defined because $\vol_h\otimes\vol_h$ is invariant under change of
orientation.

\begin{lem}\label{pszp}
If there exists a $\nabla$-parallel section $s\neq 0$ in $E$, then
$\Pf_h(R^\nabla)$ vanishes identically on $M$.
\end{lem}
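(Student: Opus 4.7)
My plan is to reduce the statement to an elementary dimension-counting fact in exterior algebra: that $\omega_R^n$ vanishes because, pointwise, $\omega_R$ already takes values in a proper subspace of $\Lambda^2E^*$. Once this is established, the conclusion $\Pf_h(R^\nabla)=\tfrac{1}{n!}\cB_h(\omega_R^n)=0$ is immediate from the definition of the Pfaffian form.

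First, I would note the standard identity $R^\nabla s=0$: viewing $s$ as an element of $\Omega^0(M,E)$ and using $\nabla s=0$, we have $d^\nabla s=0$, hence $R^\nabla s=(d^\nabla)^2s=0$. Explicitly, $R^\nabla(X,Y)s=0$ for all tangent vectors $X,Y$.

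Next, I would translate this vanishing into a property of the curvature form. Since $\omega_R(X,Y)(e,s)=h(e,R^\nabla(X,Y)s)=0$ for any $e\in E$, and, using the $h$-skew-symmetry of $R^\nabla(X,Y)$,
\[
\omega_R(X,Y)(s,e)=h(s,R^\nabla(X,Y)e)=-h(R^\nabla(X,Y)s,e)=0,
\]
the curvature form takes values pointwise in $\Lambda^2\mathrm{Ann}(s)\subset\Lambda^2E^*$, where $\mathrm{Ann}(s)\subset E^*$ denotes the annihilator of the line $\bR s$.

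To finish, I would invoke the fact that a nonzero $\nabla$-parallel section is nowhere vanishing (parallel transport along any path preserves the property of being nonzero), so $\dim\mathrm{Ann}(s(p))=2n-1$ at every $p\in M$. Consequently $\Lambda^{2n}\mathrm{Ann}(s)=0$, which forces $\omega_R^n=0$ identically, and the lemma follows. I do not expect any real obstacle here; the one subtlety worth flagging is that the argument never uses whether $s$ is null with respect to $h$ or not, since everything takes place inside $E^*$, avoiding the temptation to split $E=\bR s\oplus s^\perp$, which would fail precisely when $h(s,s)=0$.
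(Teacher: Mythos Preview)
Your proof is correct and follows essentially the same route as the paper's one-line argument: from $\nabla s=0$ deduce $R^\nabla s=0$, hence $\omega_R$ annihilates $s$ in the $E^*$-slot, so $\omega_R^n=0$ and $\cB_h(\omega_R^n)=0$. Your version simply unpacks these steps more carefully, making explicit the annihilator subspace and the nowhere-vanishing of a nonzero parallel section.
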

\begin{proof}
If $\nabla s=0$, then $R^\nabla s=0$, thus $s\lrcorner \omega_R=0$, hence $s\lrcorner(\omega_R^n)=0$, so 
$\omega_R^n=0$.
\end{proof}

\begin{theorem}
The Pfaffian of $\nabla$ is a closed form on $M$.
\end{theorem}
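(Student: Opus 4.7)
The plan is to combine three standard ingredients adapted to the double-form formalism of the previous sections: the second Bianchi identity, the Leibniz rule for $d^\nabla$ on the bigraded algebra $\Omega^{*,*}$, and a commutation relation between the exterior derivative on $M$ and the Berezin integral.

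First I would reduce to the oriented case: closedness is a local condition, so we may work on an open set on which $E$ is trivialized and oriented, and the Pfaffian form (not just the density) is defined there. On such an open set, the volume element $\vol_h \in \Gamma(\Lambda^{2n} E^*)$ is $\nabla$-parallel, because $\nabla$ preserves $h$ and preserves the chosen orientation. Consequently, for any $\alpha \in \Omega^d(M)$, the identity $d^\nabla(\alpha \otimes \vol_h) = d\alpha \otimes \vol_h$ holds, which translates into the commutation
\[
d \circ \cB_h \;=\; \cB_h \circ d^\nabla \qquad \text{on } \Omega^{*,2n}.
\]
Applying this to $\omega_R^n \in \Omega^{2n,2n}$ yields $d\,\Pf_h(R^\nabla) = \tfrac{1}{n!}\cB_h(d^\nabla \omega_R^n)$, so it remains to show $d^\nabla \omega_R^n = 0$.

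Next I would invoke the Leibniz rule for $d^\nabla$ with respect to the exterior product on double forms. Since $\omega_R$ has total degree $4$ (bidegree $(2,2)$), all sign factors arising from the graded Leibniz rule are $+1$, and therefore
\[
d^\nabla(\omega_R^n) \;=\; n\,\omega_R^{n-1}\cdot d^\nabla \omega_R.
\]
Finally, $d^\nabla \omega_R = 0$ is the second Bianchi identity, which follows in this formalism from $d^\nabla\circ d^\nabla = R^\nabla$ combined with the associativity and Leibniz rule: applying $d^\nabla$ twice to any $E$-valued form $S$ shows that the action of $d^\nabla R^\nabla$ on $S$ vanishes, and the same manipulation in the $\Lambda^2 E^*$-valued incarnation gives $d^\nabla \omega_R = 0$. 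Combining the three steps gives $d\,\Pf_h(R^\nabla) = 0$.

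The routine part is the reduction and the Bianchi identity. The main obstacle — really the only delicate point — is keeping track of the graded Leibniz rule for $d^\nabla$ relative to the exterior product on $\Omega^{*,*}(M, \Lambda^* E^*)$, since this is a product of \emph{double} forms whose sign conventions must agree with those announced in the previous subsection. Once one checks that $d^\nabla$ is a bigraded derivation of total degree $(1,0)$ with respect to this product (which it is, because the product is built from $\wedge$ on $\Omega^*(M)$ in one factor, $\wedge$ on $\Lambda^* E^*$ in the other, and $d^\nabla$ differentiates only the first factor), the argument above runs without further difficulty.
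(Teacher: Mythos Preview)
Your proposal is correct and follows exactly the paper's approach: second Bianchi identity $d^\nabla\omega_R=0$, Leibniz rule to get $d^\nabla(\omega_R^n)=0$, and the commutation $d\circ\cB_h=\cB_h\circ d^\nabla$ on $\Omega^{*,2n}$ (which the paper states without proof and you justify via parallelism of $\vol_h$). One small quibble: your parenthetical remark that $d^\nabla$ ``differentiates only the first factor'' is inaccurate, since the connection does act on the $\Lambda^*E^*$ part; the correct reason $d^\nabla$ is a derivation on $\Omega^{*,*}$ is that the induced connection on $\Lambda^*E^*$ is compatible with the wedge product, but this does not affect the validity of your argument.
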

\begin{proof}
From the second Bianchi identity, $d^\nabla \omega_R=0$, so by the Leibniz rule $d^\nabla (\omega_R^n)=0$.
Moreover, if $\alpha\in\Omega^{*,2n}$, then one can easily check that $d\cB_h(\alpha)=\cB_h(d^\nabla \alpha)$. Hence
\begin{align*}
d\Pf_h(\nabla)={}& \tfrac{1}{n!}d\cB_h(\omega_R^n)=\tfrac{1}{n!} \cB_h(d^\nabla
\omega_R^n)=0.\qedhere
\end{align*}
\end{proof}

With these preliminaries, we can now state the classical Gauss-Bonnet formula:
Let $(M,g)$ be a compact Riemannian manifold without boundary of dimension $2n$,
and $R\in\Omega^{2,2}$ the curvature of a metric connection on $TM$.
Then 
\begin{equation}\label{gbf}
\int_M \Pf(R)=(2\pi)^n \chi(M).
\end{equation}

The formula is valid even if $M$ is not orientable, in that case $\Pf(R)$ being
a density.  The reader interested in a 
short proof of this statement can skip the next section dealing with 
polyhedral manifolds and complexes thereof, as well as most of Section \ref{sec2}.

\begin{rem*}
The Pfaffian form on the tangent bundle was introduced by H.\ Hopf \cite{Hop}
motivated by geometric considerations that can be briefly summarized as follows:
The infinitesimal volume of the Gauss map of a compact hypersurface 
$M\subset \bR^{2n+1}$ equals the determinant of the 
second fundamental form, while the curvature of the hypersurface is, by the
Gauss equation, the square of the second fundamental form
in the sense of double forms. It follows that the pull-back on $M$ through the Gauss
map of the standard volume form from the sphere $S^{2n}$ equals 
the Pfaffian of the curvature. Hopf computed in this way the degree
of the Gauss map intrinsically in terms of the integral of the Pfaffian on $M$.
\end{rem*}

\section{Polyhedral manifolds}\label{Spm}

A somewhat informal notion of polyhedral manifold was used in \cite{AW}. 
We give here a rigorous definition together with an extension to a larger
category, that of polyhedral complexes.

Polyhedral manifolds extend the notion of manifolds with corners \cite{Mel}. 
Since for manifolds with embedded corners the results of the
current section become largely obvious, the reader interested only in such manifolds
can proceed directly to Section \ref{sec2}.

\subsection{Linear polyhedra}

\begin{definition*}
Let $V$ be a real vector space of dimension $n$, and $\SVs=(V^*\setminus
\{0\})/\Rz^*_+$ the dual sphere, consisting of non-zero linear forms on $V$
defined up
to a positive constant. Let $A\subset \SVs$ be a finite set.
The \emph{open linear polyhedron} in $V$ defined by $A$ is the cone
\[P_A=\{x\in V; \alpha(x)<0, (\forall)[\alpha]\in A\}. 
\]
\end{definition*}
A non-empty linear polyhedron is always unbounded according to this definition, since
it is invariant
by dilations with positive factors.
The condition $\alpha(x)<0$ is the same for every representative
$\alpha\in[\alpha]$, i.e., 
it is invariant under rescaling of $\alpha$ by a positive constant.

A \emph{closed linear polyhedron} is the closure in $V$ of an open linear polyhedron. The
closure of $P_A$
is clearly
\[\overline{P}_A=\{x\in V; \alpha(x)\leq 0, (\forall)\alpha\in
A\}\]
and note that according to this definition, the interior in $V$
of a closed polyhedron is nonempty.

Some of the linear forms defining a linear polyhedron $P_A$ may be redundant, so it is
natural to consider \emph{minimal} sets of such defining forms. 

\begin{lem}
If a set $A\subset \SVs$ defines a non-empty open linear polyhedron in $V$, there
exists
a unique minimal set $A'\subset A$ defining the same linear polyhedron, i.e.,
$P_{A'}=P_A$. Two minimal sets $A,A'\subset\SVs$ define the same polyhedra in
$V$ 
if and only if they are equal.
\end{lem}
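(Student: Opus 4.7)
The existence of a minimal $A'$ is straightforward. Call $[\alpha]\in B$ \emph{essential} in a subset $B\subset A$ if $P_{B\setminus\{[\alpha]\}}\neq P_B$; equivalently, there is a witness $y\in V$ with $\alpha(y)\geq 0$ but $\beta(y)<0$ for every $\beta\in B\setminus\{[\alpha]\}$. Starting from $A$, I iteratively discard a non-essential element whenever one exists. Since $A$ is finite the process terminates at a subset $A'\subset A$ with $P_{A'}=P_A$ in which every element is essential, so no proper subset of $A'$ defines $P_A$.

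For uniqueness, the plan is to characterize the elements of any minimal defining set intrinsically in terms of the polyhedron: for a minimal $A'$, an element $[\alpha]\in\SVs$ lies in $A'$ if and only if $\overline{P}_{A'}\subset\{\alpha\leq 0\}$ and $\overline{P}_{A'}\cap\{\alpha=0\}$ has nonempty interior within the hyperplane $\{\alpha=0\}$. Since this characterization refers only to $\overline{P}_{A'}$, it will yield both parts of the lemma at once. For the forward implication, essentialness of $[\alpha]$ in $A'$ yields a witness $y$, and on the segment from $y$ to any interior point $z\in P_{A'}$ convexity produces a point $w$ with $\alpha(w)=0$ and $\beta(w)<0$ for every $\beta\in A'\setminus\{[\alpha]\}$. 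Since each strict inequality $\beta<0$ is open, a neighborhood $U$ of $w$ in $V$ satisfies them all, and $U\cap\{\alpha=0\}$ is an $(n-1)$-dimensional open subset of $\overline{P}_{A'}\cap\{\alpha=0\}$.

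The main obstacle is the reverse implication. Suppose $[\alpha]$ satisfies the geometric conditions but $[\alpha]\notin A'$. Pick $w$ in the relative interior of $\overline{P}_{A'}\cap\{\alpha=0\}$, so that $w+\epsilon v\in\overline{P}_{A'}$ for arbitrary $v\in\{\alpha=0\}$ and sufficiently small $\epsilon$ of both signs. This forces $\epsilon\beta(v)\leq 0$ for every $\beta\in A'$ active at $w$ (meaning $\beta(w)=0$); letting $\epsilon$ take both signs shows each such $\beta$ vanishes on all of $\{\alpha=0\}$, so $[\beta]=\pm[\alpha]$. The minus sign is excluded because $\overline{P}_{A'}$ is full-dimensional (as $P_{A'}$ is nonempty and open in $V$) and is contained in both $\{\alpha\leq 0\}$ and $\{\beta\leq 0\}$. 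Hence $[\alpha]\in A'$, a contradiction; the alternative that no $\beta\in A'$ is active at $w$ would place $w$ in $P_{A'}\subset\{\alpha<0\}$, contradicting $\alpha(w)=0$.
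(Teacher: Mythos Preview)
Your proof is correct. The paper actually states this lemma without proof, treating it as an elementary fact about linear polyhedra, so there is no argument to compare against. Your approach---characterizing the elements of any minimal defining set intrinsically as those $[\alpha]\in\SVs$ for which $\overline{P}\subset\{\alpha\leq 0\}$ and $\overline{P}\cap\{\alpha=0\}$ has nonempty interior in the hyperplane---is the standard way to establish both uniqueness claims simultaneously, and your execution of the two implications is clean. The only place worth a second look is the last clause: when no $\beta\in A'$ is active at $w$ you conclude $w\in P_{A'}\subset\{\alpha<0\}$; the inclusion $P_{A'}\subset\{\alpha<0\}$ follows because $P_{A'}$ is open and contained in the closed half-space $\{\alpha\leq 0\}$, which you might state explicitly.
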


Let $P_A$ be a nonempty open linear polyhedron defined by a minimal set of linear forms
$A$. For every $\alpha\in A$, the \emph{hyperface} $P_A^\alpha$ is the open
linear polyhedron
inside the vector space $\ker(\alpha)\subset V$ defined by the relations
$\{\beta_{|\ker(\alpha)};\beta\in A, \beta\neq\alpha\}$. 
By minimality, this linear polyhedron is non-empty, has dimension $n-1$, and
sits inside the closure of $P_A$. The linear polyhedron $P_A$ has thus as many
hypersurfaces as the
cardinality of $A$. Note that the set of defining forms for $P_A^\alpha$
indexed by
$A\setminus\{\alpha\}$ may be non-minimal. 

Inductively, one defines the faces of depth (or codimension) $l\geq 2$ of a
linear polyhedron $P$ as the
hyperfaces of the faces of depth $l-1$ of $P$. A closed linear polyhedron is thus
decomposed into the disjoint union of its open faces. We denote 
\[\cF(P)=\bigcup_{l\geq 1}\cF^{(l)}\]
the set of all faces of $P$ of codimension at least $1$.

Recall the Minkowski-Weyl theorem \cite{Wep,bero}: every open linear polyhedron $P$ can
be described alternately as the set of linear combinations with positive
coefficients of some generating vectors $v_1,\ldots v_k$:
\[P=\{c_1v_1+\ldots +c_kv_k; c_j>0,(\forall) j=1,\ldots,k\}.
\]
A minimal set of such generating vectors is unique up to rescaling by
positive constants. Conversely, every finite set of vectors in $V$ spans a linear
polyhedron by positive linear combinations as above. (This linear polyhedron could be
the whole of $V$, corresponding to the empty set of linear forms $A=\emptyset$).

If $P=P_A$ is an open linear polyhedron in $V$ defined by a finite set of forms
$A\subset\SVs$, the \emph{dual polyhedron $P^*$} is the linear polyhedron inside the
dual space $V^*$ defined as the positive linear span of the vectors $\alpha\in
A$. For every face $F\subset \overline{P}$, the \emph{conormal space} $N^*F$ is
the space of forms in $V^*$ which vanish on $F$ (or equivalently, on its linear
span). The intersection of $N^*F$ with $P^*$ is a face of $P^*$.

\subsection{Polyhedra}

\begin{definition}
An \emph{open polyhedron} in a vector space $V$ is the set of points $v\in V$ satisfying
the inequalities
\begin{align}\label{defp}
\alpha_j(v)< a_j,&&j=1,\ldots,k
\end{align}
for some $\alpha_1,\ldots,\alpha_k\in V^*$ and $a_1,\ldots,a_k\in\Rz$. 
\end{definition}

Closed polyhedra and their faces are defined as in the case of linear polyhedra. 
A \emph{polytope} is a compact polyhedron. According to our definition, 
the interior in $V$ of a closed polyhedron is always nonempty, and its dimension is $\dim(V)$.

\begin{definition}
The \emph{conormal outer cone} $F^\out$ of an open face $F\subset \overline{P}$
in a polyhedron $P\subset V$ is 
the set of forms $\alpha\in V^*$ such that for every $v\in F$ and $v_0\in P$,
$\alpha(v-v_0)>0$. 
\end{definition}

If the polyhedron $P$ is defined by \eqref{defp} and $F\subset\overline{P}$ 
is the open face determined by
\[
F=\{v\in V; \alpha_1(v)=a_1,\ldots,\alpha_l(v)=a_l, \alpha_{l+1}(v)<a_{l+1},\ldots,\alpha_k(v)<a_k\}
\]
then $F^\out$ can also be described as the open cone
\[F^\out= 
\{c_1\alpha_1+\ldots+c_l\alpha_l; c_j>0,(\forall) j=1,\ldots,k\}
\]
 so in particular $F^\out$ is a linear polyhedron in $N^*F\subset V^*$. If we fix $v\in F$ and $v_0\in P$, take
\begin{equation}
F^\out_1 = \{\alpha\in F^\out; \alpha(v-v_0)=1\}.
\end{equation}
Then $F^\out_1$ is an open polytope independent of $v$, and independent of $v_0$ up to a canonical isomorphism. Moreover,
$\Rz_+^* F^\out_1=F^\out$, or in other words $F^\out$ is a cone with base $F^\out_1$.

A \emph{subcomplex $C\subset P$} in a closed polyhedron $P$ is the union of some closed
faces of $P$. 

\begin{definition}
Fix $v_0$ in the interior of a polyhedron $P$.
Let $P^\out\subset V\times V^*$ be the disjoint union
\[P^\out=\bigcup_{F\in\cF(P)} F\times F^\out.
\]
Define also $P^\out_1$ as
\[
P^\out_1=\bigcup_{F\in\cF(P)} F\times F^\out_1.
\]
\end{definition}

\begin{lem}\label{pouc}
If $P$ is a linear polyhedron in $V$ then the sets $P^\out$ and $P^\out_1$ (defined with respect to some fixed $v_0\in P$) are polyhedral subcomplexes in $P\times P^*\subset V\times V^*$.
\end{lem}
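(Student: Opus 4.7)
The plan is to exhibit $\overline{P^\out}$ as a union of closed faces of the product polyhedron $\overline P\times\overline{P^*}$, which by the definition given above is the subcomplex property, and then to obtain the analogous structure on $P^\out_1$ by intersecting $P^\out$ with a single affine hyperplane in the $V^*$-factor. The central input is polar duality for linear polyhedra: for each open face $F$ of $\overline P$ the outer cone $F^\out$ is an open face of $\overline{P^*}$, and $F\mapsto F^\out$ is an order-reversing bijection between the two face posets.

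First I would establish this duality. Writing $P=P_A$ for a minimal defining set $A\subset\SVs$, the Minkowski--Weyl theorem identifies $\overline{P^*}$ with the closed polyhedral cone generated by representatives of $A$ in $V^*$. The open faces of $\overline P$ are indexed by the active subsets $B\subset A$, namely those $B$ for which
\[
F_B=\{v\in V:\alpha(v)=0\text{ for }\alpha\in B,\ \alpha(v)<0\text{ for }\alpha\in A\setminus B\}
\]
is nonempty; by the local description of the outer cone already recorded in the text, $F_B^\out$ is the positive span of $B$, which is manifestly a face of $\overline{P^*}$. Minimality of $A$ forces this assignment to be injective, and applying the same construction to $\overline{P^*}$ yields surjectivity.

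From the duality, every $\overline F\times\overline{F^\out}$ is a product of closed faces of $\overline P$ and $\overline{P^*}$, hence a closed face of $\overline P\times\overline{P^*}$; since closure commutes with finite unions,
\[
\overline{P^\out}=\bigcup_{F\in\cF(P)}\overline F\times\overline{F^\out}
\]
is a union of closed faces, and the first assertion follows. For the second, I would observe that $\alpha\in F^\out$ lies in $N^*F$ and therefore vanishes on the linear span of $F$, so $\alpha(v-v_0)=-\alpha(v_0)$ depends only on $v_0$; hence $F^\out_1=F^\out\cap H$ for $H:=\{\alpha\in V^*:\alpha(v_0)=-1\}$, a single affine hyperplane shared by every face. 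Slicing the complex $\overline{P^\out}$ by $V\times H$ thus replaces each cone $F^\out$ by its polytope base $F^\out_1$ without altering the incidence relations, and the resulting cells $\overline F\times\overline{F^\out_1}$ together with their faces form the desired subcomplex; the intersection condition reduces to $\overline{F_1^\out_1}\cap\overline{F_2^\out_1}=\overline{(F_1\vee F_2)^\out_1}$ (empty when $F_1\vee F_2=P$). The main technical obstacle is the duality step, and within it the surjectivity of $F\mapsto F^\out$, which relies crucially on the minimality of $A$.
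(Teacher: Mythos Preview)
Your approach is the paper's, fleshed out: the paper's proof asserts in one line that $P^\out$ is a subcomplex of $P\times P^*$ and then obtains $P^\out_1$ as the intersection with the affine hyperplane $\{(v,\alpha):\alpha(v_0)=-1\}$, exactly as you do. Your duality discussion supplies what the paper leaves implicit, though note that the subcomplex claim only needs each $F^\out$ to be \emph{some} face of $\overline{P^*}$ (immediate from its description as the positive span of a subset of the generators of $P^*$), not the full order-reversing bijection or its surjectivity.
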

\begin{proof}
Clearly $P^\out$ is a subcomplex of the product polyhedron $P\times P^*$. 
As for $P^\out_1$, it is the intersection of $P^\out$ with the affine hyperplane $\{(v,\alpha);\alpha(v_0)=-1\}$.
\end{proof}

\subsection{Polyhedral manifolds and complexes}

It is natural to define a \emph{topological polyhedral manifold} $X$ of
dimension $n$ as a separated topological
space locally homeomorphic to some closed polyhedron of dimension $n$, i.e.,
every point $x\in X$ has
a neighborhood homeomorphic to an open set in a closed polyhedron (depending on
$x$) of dimension $n$. These local homeomorphisms are called \emph{charts},
and a collection of charts covering $X$ is an \emph{atlas}.
Since every polyhedron is locally homeomorphic 
to a linear polyhedron by an affine transformation, we could have used \emph{linear} 
polyhedra as model spaces in this definition.

\begin{example}\label{esp}
Fix an inner product on $V$. An open \emph{spherical polytope} in $V$ is the
intersection of an open linear polyhedron $P\subset V$ with the unit sphere in $V$. 
The closure of a spherical polytope is a polyhedral manifold diffeomorphic to
the polytope 
\[P_1=\{v\in P; \alpha_0(v)=-1\}
\]
for some fixed $\alpha_0$ in the dual polyhedron $P^*$. 
The diffeomorphism is given by radial projection.
\end{example}

A \emph{topological polyhedral complex} inside a polyhedral manifold $M$ is a
space $C$ so that the pair $(M,C)$ is locally homeomorphic to some subcomplex
inside some closed polyhedron. More precisely, for every $x\in C$
there exists a polyhedron $P_x$, a subcomplex $C_x\subset P_x$, and a
homeomorphism, called \emph{chart of complexes},
from a neighborhood $D\subset P$ of $x$ to an open set $U\subset P_x$, such that
$U\cap C$ is mapped homeomorphically onto $U\cap C_x$.

Let $P\subset V$, $P'\subset V'$ be open polyhedra and $C\subset \overline{P},
C'\subset\overline{P'}$ subcomplexes. 
A map $f:D\subset C\to C'$ is called \emph{smooth}
if for every $x\in C$ there exists an open neighborhood $U$ of $x$ in $V$ and a
smooth map $F:U\to V'$ extending $f|_{U\cap C}:U\cap C\to C'$ such that $F(U\cap P)\subset P'$.

\begin{definition*}
A \emph{polyhedral manifold} (respectively a \emph{polyhedral complex}) is 
is a topological polyhedral manifold (respectively a topological polyhedral complex) 
endowed with a smooth atlas.
\end{definition*}

A point of depth $l\geq 0$ in a polyhedral manifold $M$ is a point mapped to a
point of depth $l$ through
a chart (hence through any chart). A \emph{open face of codimension $l$} of $M$
is a connected component 
of the set of points of depth $l$. Such a face is clearly a smooth manifold of
dimension $n-l$. However, its closure
is in general \emph{not} a polyhedral manifold of the same dimension! 

For a face $Y$ inside a polyhedron $M$, we denote by $\cF^{(l)}(Y)$ the set of open
faces of $M$ which have codimension $l\geq 0$ in $\overline{Y}$. 

\begin{definition}\label{defreg}
A polyhedral manifold $M$ is \emph{regular} if the closure of every open face of
$M$ is again a polyhedral manifold.
\end{definition}

Linear polyhedra and spherical polytopes are examples of regular polyhedral
manifolds; the region in the plane bounded by a smooth segment self-intersecting
orthogonally in its end-points is an example of non-regular polyhedral surface. Regular 
polyhedral manifolds are an extension of the notion of manifolds-with-corners 
with embedded faces.
The polyhedral manifolds considered in \cite{AW} seem to be regular, although the
authors are imprecise on this aspect.

\subsection{Outer spheres and the outer cone complex}\label{oc}

Let $x$ be a point in a face $Y$ of a polyhedral manifold $M$. An
\emph{interior vector} at $x$ is the tangent vector in $0$ 
to a smooth curve $c:[0,\epsilon)\to M$ with $c(0)=x$. 

A Riemannian metric on a polyhedral manifold $M$ is a metric which, in any chart
modeled by a polyhedron in a vector space $V$, extends
to a smooth metric on an open set of $V$. Riemannian metrics can be constructed
on any polyhedral manifold using partitions of unity.

\begin{definition}
Let $x\in M$ be a point in the Riemannian polyhedral manifold $M$, and let
$Y$ be the unique open face of $M$ of which $x$ is an interior point.
The \emph{outer cone at $x$}, denoted $C^\out_x Y$, is the set 
of those vectors in $T_xM$ whose inner products with every 
interior tangent vector at $x$ are non-positive.
The \emph{outer sphere} $S^\out_x Y$
is the set of unit vectors in the outer cone at $x$. 
\end{definition}

Every vector $V\in S^\out_x Y$ is orthogonal to $Y$, so $S^\out_x Y$ is a subset
of the normal sphere to $Y$ at $x$ (otherwise, the projection
of $V$ on $Y$ would have positive inner product with $V$). Thus $S^\out_x Y$ is
a spherical polytope inside $N_xY$, the orthogonal complement
in $T_XM$ to $T_xY$.

\begin{prop}\label{lemfb}
Let $((M,g^M)$ be a Riemannian polyhedral manifold of dimension $n$, $Y\subset
M$ an open face of codimension $l$, and $a\in Y$ a fixed point. Then 
\[ S^\out Y= \bigsqcup_{x\in Y} S^\out_xY\subset TM
\]
is a flat, locally trivial bundle over $Y$ with fiber type the polytope
$Y^*=S^\out_a Y$. If $M$ is regular, then $S^\out Y$ 
is globally trivial, $S^\out Y\simeq Y\times Y^*$ .
\end{prop}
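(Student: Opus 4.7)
My plan is to construct local trivializations of $S^{\out}Y$ directly from polyhedral charts, show that the transition maps reduce to a finite group of combinatorial automorphisms of $Y^*$, and then use the rigidity of the global face structure in the regular case to upgrade to a product.

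For the local trivialization, I would take a polyhedral chart $\phi$ around $x_0 \in Y$, mapping a neighborhood $U$ of $x_0$ into an open piece of some closed polyhedron in a vector space $V$, with $\phi(x_0)$ interior to an open face $F$ of dimension $n-l$. The differential $d\phi$ identifies $T_xY$ with the constant subspace $\mathrm{span}(F) \subset V$ for $x \in U \cap Y$, and thereby trivializes $N^*Y|_{U \cap Y}$ as $(U \cap Y) \times N^*F$. In this trivialization the conormal outer cone at every $x$ is literally the same polyhedral cone $F^{\out} \subset N^*F$, because the set of interior tangent vectors at points of $F$ is independent of $x$ in the chart. Transporting via the Riemannian metric, the musical isomorphism $g^\flat_x$ restricts to a smooth family of linear isomorphisms $N_xY \to N^*_xY$ carrying $C^{\out}_xY$ onto $F^{\out}$; composing with radial projection onto the affine section $F^{\out}_1$ yields a local homeomorphism $\Phi_U : S^{\out}Y|_{U \cap Y} \xrightarrow{\sim} (U \cap Y) \times F^{\out}_1$.

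For flatness I would examine a transition $\Phi_{U'} \circ \Phi_U^{-1}$ on a connected overlap. The underlying chart transition preserves the face stratification, hence carries the faces of $P$ whose closure contains $F$ bijectively onto the faces of $P'$ whose closure contains $F'$ in a codimension-preserving way. Since each face of $F^{\out}_1$ is labeled by such a face of the ambient polyhedron, the transition acts on $F^{\out}_1$ by a label-preserving combinatorial automorphism. The group of such automorphisms is finite, and a smooth family of elements taking values in a finite group is locally constant. Consequently $S^{\out}Y$ has discrete structure group and is flat, with fiber type $Y^* \cong F^{\out}_1$.

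Assuming $M$ is regular, $\overline{Y}$ is itself a polyhedral manifold of dimension $n-l$ whose faces are in bijection with the faces of $M$ whose closure contains $Y$. The labels of the faces of $Y^*$ therefore admit a global extension over $Y$, so the locally constant transitions can be chosen trivially; patching the $\Phi_U$ then produces the global trivialization $S^{\out}Y \simeq Y \times Y^*$. The main difficulty I anticipate is the flatness step, where one must verify that a smoothly varying linear isomorphism between fibers reduces, after a consistent labeling of faces, to a locally constant permutation --- this hinges on the rigidity of the finite combinatorial data underlying the polytope $F^{\out}_1$.
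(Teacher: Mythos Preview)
Your local trivialization is essentially the paper's: pass to a polyhedral chart, observe that the conormal outer cone $F^{\out}\subset V^*$ is a \emph{constant} object independent of the base point, and then use the musical isomorphism together with radial projection onto $F^{\out}_1$ to obtain the product $(U\cap Y)\times F^{\out}_1$. That part is correct and matches the paper line for line.

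The flatness step has a genuine gap. The transition $\Phi_{U'}\circ\Phi_U^{-1}$ is, concretely, the restriction to $F^{\out}_1$ of the projectivized dual of $d(\phi_{U'}\circ\phi_U^{-1})$, and this linear map varies with the base point $x$. You are right that the induced map on the face lattice of $F^{\out}_1$ is the identity (faces are labeled by faces of $M$, and the chart transition respects those labels), but the group of diffeomorphisms of a polytope that fix every face \emph{setwise} is infinite-dimensional, not finite. So the step ``a smooth family taking values in a finite group is locally constant'' does not apply to the actual transitions you have written down. The paper does not argue flatness separately at all; it just exhibits the canonical product $F\times F^{\out}_1$ in a chart and leaves the word ``flat'' to be read as a consequence of that description.

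For the regular case the paper takes a more concrete route than your labeling-and-patching scheme, and it avoids transition functions altogether. Regularity means each hyperface $H$ of $M$ is embedded, so its unit outer normal $\nu_H$ is a \emph{global} section along $\overline H$. A codimension-$l$ face $Y$ is then a connected component of an intersection $H_1\cap\ldots\cap H_k$ of hyperfaces, and the global sections $\nu_{H_1},\ldots,\nu_{H_k}$ are, at every $x\in Y$, precisely the generating vectors of the spherical polytope $S^{\out}_xY$. This furnishes the trivialization $S^{\out}Y\simeq Y\times Y^*$ directly. Your ``global labels'' are really these normals in disguise; naming them explicitly is what closes the argument.
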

\begin{proof}
Let $a\in M$. There exists a connected domain $M\supset D\ni a$, a chart $\phi:D
\to \overline{P}\subset \Rz^n$ with values 
in a closed polyhedron $\overline{P}$, and a Riemannian metric $g^P$ on $\Rz^n$
whose pull-back through $\phi$ is $g^M$. 
Let $Y$ be the unique open face of $M$ containing $a$, and denote by $l$ its
dimension. Then $D\cap Y$ is mapped into a face $F\subset P$ of the same
codimension. Clearly $\phi_*:TM\to TP$ is a bundle isomorphism over its image,
and thus for every $x\in D$
its restriction to the outer spheres $\phi_x:S^\out_x Y\to S^\out_{\phi(x)}F$ is
a linear isomorphism of spherical polytopes. To show that $S^\out Y$ is locally
diffeomorphic to a product, it is thus enough to show that $S^\out F$ is
canonically isomorphic to the product bundle over $F$ with fiber type
$S^\out_bF$. 

The image of $C^\out F$ through the isomorphism $TP\to T^*P$ induced by $g^P$
is just $F\times F^\out$. 
We have seen in Example \ref{esp} that after fixing an interior point in $P$,
the bundle of spheres in $F\times F^\out$
becomes canonically diffeomorphic to the product polytope $F\times F^\out_1$.

Assume now that $M$ is regular. The closure of every open face is then
embedded in $M$, and hence the unit outer normal vector field to every open hyperface
extends continuously to the closure.
It follows that every closed face $Y$ is a connected component of an intersection
of hyperfaces. For all $x\in Y$, 
the generating vectors of the spherical polytope $S^\out_xY$ are the unit normals
to distinct hyperfaces of $M$, and they trivialize $S^\out Y$ as in Example \ref{esp}.
\end{proof}

The contribution of a face $Y\subset M$ in the Gauss-Bonnet formula will turn out
to be given by an integral on the total space of the bundle of outer spheres $S^\out Y$,
or equivalently the integral on $Y$ 
of a transgression of the Pfaffian of order $\codim(Y)$.

\begin{prop}\label{mopc}
Let $(M,g^M)$ be a Riemannian polyhedral manifold of dimension $n$. Then the set
of outer-pointing unit tangent vectors, denoted $M^\out$, is a polyhedral
complex of dimension $n-1$ inside $TM$. The open top-dimensional faces of
$M^\out$ are the interiors of the outer sphere bundles 
$S^\out Y$, where $Y$ spans all the open faces of $M$ of codimension $l\geq 1$. 
\end{prop}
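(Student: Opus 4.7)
The plan is local: near each point of $M^\out$, I would transfer the polyhedral subcomplex structure of Lemma \ref{pouc} from the linear model to $M^\out$ using a polyhedral chart on $M$ together with the Riemannian metric. Fix $a \in M^\out$ lying over $x \in M$, and let $Y$ be the open face of $M$ containing $x$, of some codimension $l \geq 1$. Choose a polyhedral chart $\phi : D \to \overline{P} \subset V$ around $x$, and denote $g^P = \phi_* g^M$, extended smoothly to an open neighborhood in $V$. Write $F \subset \overline{P}$ for the face corresponding to $Y$. The proof of Proposition \ref{lemfb} shows that the composition of the differential $\phi_* : TM|_D \to V \times V$ with the metric isomorphism $V \to V^*$ induced by $g^P$ sends the outer cone over each face $Y' \cap D$ of $M$ onto $F' \times (F')^\out$, where $F'$ is the corresponding face of $P$. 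Assembled over all faces meeting $D$, this yields a diffeomorphism between the total space of outer cones over $D$ and an open subset of the polyhedral subcomplex $P^\out \subset P \times P^*$.

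Next, I would cut down to the unit sphere bundle by radial projection. Fix $v_0$ in the interior of $P$; then $P^\out_1$ is a polyhedral subcomplex by Lemma \ref{pouc}, and the radial map $(v,\alpha) \mapsto (v, \alpha/\alpha(v - v_0))$, well-defined since $\alpha(v - v_0) > 0$ on $P^\out$, is a diffeomorphism between the $g^P$-unit-sphere locus of the previous identification and an open neighborhood of the image of $a$ inside $P^\out_1$. Composing, one obtains a chart of complexes for $(TM|_D, M^\out|_D)$ modeled on the pair $(P \times P^*, P^\out_1)$. Smoothness of this chart and of transitions between two overlapping such charts follows from the smoothness of the polyhedral atlas of $M$, the smoothness of $g^M$, and the non-vanishing of the radial normalization. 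This realizes $M^\out$ as a polyhedral complex inside $TM$.

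For the identification of top-dimensional faces, a face $Y$ of codimension $l \geq 1$ has $\dim Y = n - l$, while the fiber $S^\out_x Y$ is the unit sphere in the $l$-dimensional conormal cone $C^\out_x Y$, of dimension $l - 1$. Hence $\dim S^\out Y = n - 1$, independently of $l$. Under the chart above, the interior of $S^\out Y$ corresponds to the open stratum $F \times (F^\out_1)^\circ$ of $P^\out_1$, which is an open top-dimensional face of the model; conversely every such open stratum arises from some face $Y$ of codimension $\geq 1$ in this way.

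The main technical obstacle is verifying smooth compatibility of the resulting atlas of complexes on $M^\out$: the radial normalization depends on the metric $g^P$ and on the chosen interior point $v_0$, so two different polyhedral charts on $M$ yield a priori distinct polyhedral models for $M^\out$, linked by a composition of a polyhedral-manifold chart transition on $M$, its differential on $TM$, and a metric-dependent renormalization. Each ingredient is individually smooth, so the compatibility reduces to a chain-rule calculation, but one must check that the composite map extends smoothly off the relevant subcomplex in the sense required by the definition of smooth map between subcomplexes.
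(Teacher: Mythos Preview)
Your proposal is correct and follows essentially the same strategy as the paper: reduce to the local model of a polyhedron $P$, use the metric isomorphism $TP\to T^*P$ to carry the outer cones to the conormal side, and then invoke Lemma~\ref{pouc} to recognize the result as the subcomplex $P^\out_1$. The paper organizes this slightly differently (first treating the case where $M$ is itself a polyhedron in $\Rz^n$, then localizing), and it omits the explicit dimension count and the discussion of smooth compatibility of overlapping charts that you include, but the substance is the same.
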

\begin{proof}
Suppose first that $M$ is the closure of an open polyhedron in $\Rz^n$ defined
by a minimal set of linear forms $\alpha_1,\ldots,\alpha_p\in A$.
If $Y\subset M$ is a face of codimension $l$, let $A_Y\subset A$ be the set of
those $\alpha_j$'s which vanish along $Y$. After relabeling, we can assume that
$A_Y=\{\alpha_1,\ldots,\alpha_k\}$.
Proposition \ref{lemfb} tells us that $C^\out Y$ is isomorphic to $Y\times Y^*$, where
the polytope $Y^*$ is the outer sphere at some fixed point $a\in Y$. Moreover,
$S^\out_y Y$ is spanned as a spherical polytope by the unit vectors
$V_1(y),\ldots,V_k(y)$ dual via $g^M$ to $\alpha_1\ldots,\alpha_k$. It follows that
$S^\out_y Y$ is diffeomorphic to the conormal outer cone $Y^\out$ under the difeomorphism
$TM\to T^*M$ induced by the metric $g^M$. Hence $C^\out Y$, respectively $S^\out Y$
are diffeomorphic 
to $Y\times Y^\out$, respectively to $Y\times Y^\out_1$. Thus the claim follows from
Lemma \ref{pouc}.

In general, $M$ is only locally diffeomorphic to a polyhedron $P$, thus $M^\out$ 
is locally diffeomorphic to a subcomplex in $TP$, which by definition means that
it is a polyhedral complex.
\end{proof}

\subsection{Boundaries of polyhedral complexes and the Stokes formula}

It is straightforward to define smooth differential forms, the exterior
derivative, and restriction of forms to faces of polyhedral manifolds. 
We prove below that Stokes formula also
continues to hold on polyhedral complexes, once we properly define the 
boundary of a face. 

Let $M$ be a polyhedral complex of dimension $k\geq 1$
and assume that we fix an orientation on all the faces of $M$ of dimension $k$ and $k-1$.
Let $F\subset M$ be an open face of dimension $k-1$. We define an integer $\mu_M(F)$,
the \emph{multiplicity} of $F$ in $M$, counting how many times $F$ appears 
as the oriented boundary of faces of $M$, as follows: take $x\in F$ and a 
connected chart in $M$ near $x$,
$\phi:D\to P$, mapping a neighborhood of $x$ 
into the $k$-skeleton of a polyhedron $P$. The chart induces orientations 
on the $k$-dimensional faces $Y_1,\ldots,Y_s$ of $P\cap \phi(D)$ whose closure 
contains the image of $x$, and also on the unique 
face $F'\subset P\cap\phi(D)$ of dimension $k-1$ containing $\phi(x)$. 
For $j=1,\ldots,s$ let $\nu_j$ be a vector field along $F'$ pointing inside $Y_j$. Let
$\mu_x(F',Y_j)\in\{\pm1\}$ be $1$ if the orientations on $F'$ and $Y_j$ are compatible
(i.e., $\nu_j,e_1,\ldots,e_{k+1})$ is a negatively oriented frame in $T_xY_j$ whenever
$(e_1,\ldots,e_{k+1})$ is a positively oriented frame in $T_xF'$) and $-1$ otherwise.
Define
\[\mu_x(F,M)=\sum_{j=1}^s \mu(F',Y_j)\in\cz.\]
This quantity is locally constant on the connected face $F$, hence it is a constant,
denoted by $\mu(F,M)\in \cz$. We define the $(k-1)$-boundary of $M$ as the formal sum
\[\partial_{k-1}(M)= \sum_{F\in\Sk^{k-1}M} \mu(F,M)\cdot F.
\]

Note that for a polyhedral manifold $M$ of dimension $n$ with orientable interior, 
$M^\out$ is a polyhedral complex with a natural orientation on the $n-1$-dimensional 
faces, and
\[\partial_{n-2} M^\out  = 0.
\]

\begin{lem}\label{thSt}
Let $M$ be an oriented polyhedral complex of dimension $k$
and $\omega\in\Omega^*(M)$ 
a compactly supported form. Then
\[\int_M d\omega = \int_{\partial_{k-1} M} \omega.
\]
\end{lem}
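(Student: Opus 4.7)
The plan is to localize via a partition of unity and then reduce to the case of a subcomplex sitting inside a single closed polyhedron, where classical Stokes applies on each top-dimensional face and the coefficients $\mu(F,M)$ arise naturally by bookkeeping the signs in the induced boundary orientations.

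First I would choose a smooth partition of unity subordinate to an atlas of charts of complexes, each of the form $\phi_i\colon D_i\to U_i\subset P_i$ where $C_i:=\phi_i(D_i\cap M)$ is (an open subset of) a $k$-dimensional subcomplex of the polyhedron $P_i$. Since both sides of the claimed identity are linear in $\omega$, it suffices to prove it when $\omega$ is compactly supported in one such $D_i$. Using $\phi_i$, we may further transfer the problem to the subcomplex $C_i\subset P_i$ inside an ambient affine space.

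In this local model, the top-dimensional faces of the subcomplex are finitely many relatively open polyhedral pieces $Y_1,\ldots,Y_s$, each a subset of an affine $k$-plane. The closure of each $Y_j$ is a compact manifold with corners whose codimension-one boundary strata are $(k-1)$-faces of the polyhedron $P_i$. By the classical Stokes formula for each oriented $Y_j$ (with the induced boundary orientation from the outer normal),
\[
\int_{Y_j} d\omega = \sum_{F'\subset \partial Y_j} \int_{F'_{\mathrm{ind}}} \omega,
\]
where $F'_{\mathrm{ind}}$ denotes $F'$ equipped with the induced orientation. Comparing $F'_{\mathrm{ind}}$ with the orientation fixed on $F'$ by the chart, the definition of $\mu_x(F',Y_j)$ (after accounting for the inner vs.\ outer normal convention, which flips one sign) yields
\[
\int_{F'_{\mathrm{ind}}} \omega = \mu_x(F',Y_j)\int_{F'} \omega.
\]
Summing over $j$ and reorganizing the sum according to each $(k-1)$-face $F'$ present in $C_i$ gives
\[
\int_M d\omega \;=\; \sum_j \int_{Y_j} d\omega \;=\; \sum_{F'} \Bigl(\sum_{j:\,F'\subset \overline{Y_j}} \mu_x(F',Y_j)\Bigr)\int_{F'} \omega \;=\; \sum_{F'} \mu(F',M)\int_{F'}\omega,
\]
which is exactly $\int_{\partial_{k-1} M} \omega$. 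Boundary contributions coming from the edge of the chart $D_i$ do not appear because $\omega$ is compactly supported inside $D_i$.

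The main obstacle is the orientation bookkeeping: one must verify that the sign extracted from the classical Stokes formula on each $\overline{Y_j}$ matches, through the chart, the integer $\mu_x(F',Y_j)$ as defined in the paper, and that when the top-dimensional faces of the complex are reindexed via a different chart the pointwise sum $\mu_x(F',M)$ is unchanged (this is the locally constant property already invoked in the definition). Once this purely combinatorial-orientational check is dispatched, the rest is standard manifold-with-corners Stokes applied face by face.
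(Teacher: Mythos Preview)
Your proposal is correct and follows essentially the same route as the paper: localize via a partition of unity to a chart modeled on a subcomplex of a polyhedron, apply the classical Stokes formula on each top-dimensional face, and then regroup the boundary integrals so that the signs assemble into the multiplicity $\mu(F,M)$. The only cosmetic difference is that the paper further shrinks the support to a small ball meeting only the faces through its center, but this is the same localization idea carried one step further.
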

\begin{proof}
By local
considerations involving partitions of unity, it is enough to prove the claim when the form 
$\omega$ is supported in a chart domain. We may therefore  assume that $M$ is a 
subcomplex of dimension $k$ in a polyhedron $P$, and moreover that $\omega$ is supported in a small ball 
which only intersects those faces passing through its center. On the
intersection of this ball with each open $k$-dimensional face $Y$ we compute the integral
of $d\omega$ using the usual Stokes formula, 
obtaining the integral of $\omega$ on the $k-1$-dimensional faces $F$ bounding $Y$, with a sign depending on
the compatibility between the orientations on $Y$ and $F$.
When summing over all $Y$ for a fixed $F$, we eventually get $\mu(F,M)$ times the integral of $\omega$ on $F$.
\end{proof}

\section{Transgression forms}\label{sec2}

Let $X$ be an $l$-dimensional polytope
(see Section \ref{Spm} for the definition). Let $E\to M$ be a
real vector bundle of rank $2n$ endowed with a pseudo-metric and a 
compatible connection over a smooth manifold $M$ of arbitrary dimension.

Let $V$ be a family indexed by $X$ of unit-length sections in $E$ over $M$,
i.e., a section $V:X\times M\to \pi_2^* E$ such that $h(V,V)=1$. 
In particular, for every $x\in X$, $V(x,\cdot)$ is a section in $E$, so $V$ can
be viewed as a map $V\in C^\infty(X,\Omega^{0,1}(M,E))$
(remember that $E$ is identified with $E^*$ using $h$). The connexion $\nabla$
in $E$ acts of such section-valued maps, and
$\nabla V\in C^\infty(X,\Omega^{1,1}(M,E))$ is a family of $(1,1)$-forms indexed
by $X$. Let $d^X V$ be the differential on $X$ of the $\Omega^{0,1}(M,E)$-valued
function $V$. In local coordinates, 
\begin{align*}
d^X V:={}&\sum_{j=1}^l dx^j\otimes \partial_{x_j} V\in
\Omega^{1}(X,\Omega^{0,1}(M,E)),\\
(d^X V)^l= {}& l! dx^1\wedge\ldots\wedge dx^l\otimes \partial_{x_1}
V\wedge\ldots\wedge\partial_{x_l}V\in\Omega^{l}(X,\Omega^{0,l}(M,E)).
\end{align*}
For integers $l,k,n$ satisfying $0\leq l\leq 2k+1\leq 2n-1$, define a universal
constant $c(n,k,l)$ by
\[c(n,k,l)= \tfrac{2^{k} k!}{(n-1-k)!(2k+1-l)!}\in\mathbb{Q}.\]
\begin{definition}\label{deftr}
For $l\geq 0$, we define the $(l+1)^{\text{th}}$ transgression of the Pfaffian
with respect to the family $V:X\to\Omega^{(0,1)}(M)$ by
\[\cT^{(l+1)}_{V}=\sum_{l\leq 2k+1\leq 2n-1} \tfrac{c(n,k,l)}{l!} \int_X \cB 
\left[V (d^X V)^l \left(\nabla V\right)^{2k+1-l}R^{n-1-k} \right]\in
\Omega^{2n-l-1}(M).\]
\end{definition}
Here $(d^X V)^l\in \Omega^l(X)\otimes \Omega^{0,l}(M,E)$,
$R\in\Omega^{2,2}(M,E)$, and $\nabla V\in C^\infty(X,\Omega^{1,1}(M,E))$,
so $\cB$ is applied to a volume form on $X$ tensored with a form of degree $2n-l-1$
on $M$. 

For $l=0$, the first transgression of the Pfaffian of the Levi-Civita connection
on the sphere bundle of a Riemannian manifold
was introduced by Chern \cite{Ch1}.

\subsubsection*{Functoriality}

Like the Pfaffian, the transgression forms are functorial: Let $E\to M$ be a
vector bundle with metric and connection,
and $\Phi:N\to M$ a smooth map. We equip $\Phi^*E$ with its pull-back metric and
connection. 
For every family $V$ of unit sections in $E$ indexed by $X$, we get a family
$\Phi^*V$ of unit sections in $\Phi^*E$.
In this framework,
\[\cT^{(l+1)}_{\Phi^*V} = \Phi^* \cT^{(l+1)}_{V}\in \Omega^{2n-l-1}(N).\]

When $V$ can be inferred from the context, 
we will write $\cT^{(l+1)}_{Y}$ instead of $\cT^{(l+1)}_{V_Y}$ for
the transgression form with respect to the restriction of $V$ along a 
subcomplex $Y$ of $X$. By functoriality, this notation does not lead to confusion.

We are now ready to prove our main result about transgressions.

\begin{theorem} \label{thT}
Let $E\to M$ be a vector bundle endowed with a semi-riemannian
metric $h$ and a compatible connection $\nabla$, 
and $V$ a family of unit-length sections in $E\to M$ indexed by an
oriented polytope $X$ of dimension $l$. Then
\[d \cT^{(l+1)}_X=\begin{cases} -\cT^{(l)}_{\partial X}& \text{for $l=\dim X\geq
1$},\\
-\Pf(\nabla) & \text{for $X=*$, i.e., $l=\dim(X)=0$},
\end{cases}
\]
where $\cT^{(l)}_{\partial X}\in\Omega^{2n-l}(M)$ is the sum of the
transgression forms corresponding to the oriented hyperfaces of $X$, 
i.e., the transgression corresponding to the boundary cycle of $X$.
\end{theorem}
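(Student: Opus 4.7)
The plan is to commute $d$ past $\int_X$ and $\cB$ (using the identity $d\cB = \cB \circ d^\nabla$ already invoked in the proof that $\Pf(\nabla)$ is closed), and then expand $d^\nabla$ on each integrand by Leibniz. The relevant identities are $d^\nabla V = \nabla V$, $d^\nabla(\nabla V) = R^\nabla V$ (endomorphism action, from $(d^\nabla)^2 = R^\nabla$), $d^\nabla R = 0$ (second Bianchi), and $d^\nabla(d^X V) = -d^X(\nabla V)$ (since $d^\nabla$ and $d^X$ anticommute as differentials in distinct factors on $X \times M$). This produces three families of terms in $k$, which I label (up to signs) as
\begin{align*}
A_k &= (d^X V)^l (\nabla V)^{2k+2-l} R^{n-1-k},\\
B_k &= (2k+1-l)\,V (d^X V)^l (\nabla V)^{2k-l}(R^\nabla V) R^{n-1-k},\\
C_k &= -l\,V (d^X V)^{l-1} d^X(\nabla V) (\nabla V)^{2k+1-l} R^{n-1-k}.
\end{align*}

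Next, I try to combine Types $A_k$ and $C_k$ into a $d^X$-exact form on $X$. Exploiting $d^X(d^X V) = 0$ and $d^X R = 0$, Leibniz on the candidate primitive yields
\[d^X\!\bigl[V(d^X V)^{l-1}(\nabla V)^{2k+2-l} R^{n-1-k}\bigr] = A_k \pm \tfrac{2k+2-l}{l}\,C_k\]
up to sign, so the $A_k$ and $C_k$ pieces combine into a $d^X$-exact form after a single linear rearrangement. Only the Type $B_k$ family remains to be absorbed.

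The key step is to show that the $B_k$'s, together with any residual $A_k$'s left over after the previous rearrangement, telescope to zero once summed with the weights $c(n,k,l)/l!$. This requires an algebraic identity expressing $V(R^\nabla V)$ inside $\cB$ as a factor $\omega_R$ acting by double-form multiplication (up to $\cB$-null terms), which trades one $(\nabla V)$ for one $R$ and shifts the summation index from $k$ to $k-1$. The constants $c(n,k,l) = 2^k k!/((n-1-k)!(2k+1-l)!)$ are designed so that the resulting recurrence $c(n,k,l)\cdot(2k+1-l) = 2(n-k)\cdot c(n,k-1,l)$ (or a variant) holds exactly, producing a telescoping sum in which all non-$d^X$-exact contributions cancel.

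Finally, Stokes' formula on the oriented polytope $X$ (Lemma \ref{thSt}) applied to the surviving $d^X$-exact form converts $\int_X$ into $\int_{\partial X}$, and by functoriality together with the definition of $\cT^{(l)}$, the result is precisely $-\cT^{(l)}_{\partial X}$, with the overall minus sign tracing back to the anticommutation $d^\nabla d^X = -d^X d^\nabla$. The base case $l = 0$ (so $X = \ast$) is special: $d^X V = 0$ eliminates the Types $C_k$ and the Stokes step is absent, so the telescoping of $A_k$ and $B_k$ collapses to the single surviving term $-\cB[\omega_R^n]/n! = -\Pf(\nabla)$, recovering Chern's original first-transgression identity. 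The main obstacle is the algebraic identity for $V(R^\nabla V)$ under $\cB$: it requires distinguishing carefully between the two products on $\Omega^*(M, \Lambda^2 E^*)$ discussed after \eqref{omegaa}, and exploiting both the skew-symmetry of $R^\nabla$ and the unit condition $h(V,V) = 1$ to secure the correct cancellation pattern. Once this identity is in place, the remainder of the proof reduces to the combinatorial verification that the weights $c(n,k,l)$ produce exact telescoping.
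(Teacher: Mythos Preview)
Your approach is viable and is essentially the direct extension of Chern's original 1944 computation to higher $l$, but it follows a genuinely different route from the paper's. You differentiate the explicit sum in Definition~\ref{deftr} and organize the resulting terms into a $d^X$-exact piece (yielding $\cT^{(l)}_{\partial X}$ via Stokes on $X$) plus a remainder that must telescope; the algebraic identity you flag for $V(R^\nabla V)$ under $\cB$ does hold (once $V$ appears, the remaining curvature factors contribute only their $V^\perp$-components at top $E$-degree), and the constants $c(n,k,l)$ do satisfy a recurrence of the form you indicate. The paper instead introduces an auxiliary connection $D$ on $\pi^*E\to\bR\times\SE$ interpolating between $\pi^*\nabla$ (at $t=1$) and a connection for which the tautological section $s$ is parallel (at $t=0$, so $\Pf=0$ by Lemma~\ref{pszp}), defines $T_X:=\int_{[0,1]\times X}\Pf(\Phi^*D)$, and obtains $dT_X=-T_{\partial X}$ immediately from the closedness of the Pfaffian and Stokes on the polytope $[0,1]\times X$---no telescoping argument at all. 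The only remaining work is to expand $(\Phi^*R^D)^n$, extract the $dt\wedge(\text{top }X)$-component, and integrate in $t$ to identify $T_X$ with $\cT^{(l+1)}_X$; the constants $c(n,k,l)$ then emerge from $\int_0^1(1-t^2)^k\,dt=(2k)!!/(2k+1)!!$. Your route stays on $X\times M$ and is more elementary in spirit, but demands careful sign bookkeeping across three interacting families and the verification of the $\cB$-identity, both of which you leave sketchy; the paper's route adds auxiliary geometry (the sphere bundle and an extra parameter $t$) but cleanly separates the transgression identity---now a one-line consequence of Stokes---from the purely computational identification of the constants.
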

\begin{proof}
Let $\pi:\SE\to M$ be the locally trivial bundle of unit (pseudo-)spheres in $E$
with respect to $h$. The tangent bundle to $\SE$ contains the vertical tangent
bundle to the fibers. There is a natural horizontal complement to this vertical
bundle, defined by using the connection $\nabla$: the horizontal lift of
a path $\gamma:\bR\to M$ at a point $v\in S_{\gamma(0)}M$ is the parallel
transport of 
$v$ along $\gamma$. Thus $\nabla$ induces a splitting of $T\SE$ as 
\[T\SE= T^\verti \! \SE\oplus \pi^* TM.
\]
In the vector bundle $\pi^*E\to \SE$ we have the tautological section $s$ of
$h$-length $1$, defined by $s_v:=v\in E_{\pi(v)}=(\pi^*E)_v$.
This connection preserves the pull-back metric $h$,
still denoted by the same symbol.
\begin{lem}
Let $\nabla^1=\pi^*\nabla$ be the pull-back connection in $\pi^*E\to\SM$. Then
\begin{equation}\label{naus}
\nabla^1 s= I_{T^\verti \SE}
\end{equation}
in the sense that $\nabla^1_U s= U$ for every $U\in T_v^\verti \!
\SE=T_v(S_vM)\subset E_v$. 
\end{lem}
\begin{proof}
Essentially by definition, the canonical section $s$ is parallel 
in horizontal directions with respect to $\nabla^1$, so $\nabla^1s$
is a vertical double form.
Also by definition, the pull-back connection is trivial in vertical directions,
so we compute $\nabla^1 s = ds$ on each vertical pseudo-sphere $S_vE$, where 
$s$ becomes a map from $S_vE$ to the fixed vector space $E_v$. 
\end{proof}

The idea of computing the Pfaffian of $\nabla^1$ (going back to Chern
\cite{Ch1}) is to modify $\nabla^1$ on $\SE$ so that $s$ becomes parallel. For
this, define
a vertical $(1,2)$-double form $\alpha\in \Omega^1(\SE,\Lambda^2\pi^*E)$ by
\begin{align*}
\alpha=s\cdot \nabla^1 s,&&\alpha(U)=s\wedge U\in \Lambda^2\pi^*E
\end{align*}
for every $U\in T^\verti \! \SE$, and $\alpha(H)=0$ for $H\in T^\hor \! \SE$
horizontal. 
Denote by $A$ the skew-symmetric endomorphism-valued $1$-form
associated to $\alpha$ via $h$ as in \eqref{omegaa}:
\begin{align}\label{as}
\alpha(U)(V,W)=\langle V,A(U)W\rangle_h,&&(\forall) U\in T\SE, (\forall)
V,W\in\pi^*E,
\end{align}
so $\alpha=\omega_A$ using the notation from \eqref{omegaa}. (We identify $E$
with $E^*$ via the musical isomorphism 
in terms of $h$, thus $\Lambda^2\pi^*E\simeq \Lambda^2\pi^*E^*$). Then 
\[\nabla^1 s=-A s.\]
For $t\in\bR$ set 
\[\nabla^t:=\nabla^1+(1-t)A.
\]
Since $A$ is skew-symmetric, $\nabla^t$ clearly preserves $h$.
We compute $\nabla^t s= -t A s=tI_{T^\verti \! \SE}$, hence $\nabla^0s=0$ and
thus, by Lemma \ref{pszp}, $\Pf_h(\nabla^0)=0$. We shall recover the Paffian of
$\nabla^1$ as the integral from $0$ to $1$ of the $t$-derivative of
$\Pf(\nabla^t)$.

Consider the connection $D$ on the pull-back bundle $\pi^* E\to \bR\times \SE$
defined by
\begin{equation}\label{defD}
D:=dt\otimes \partial_t(\cdot)+\nabla^t=dt\otimes
\partial_t(\cdot)+\nabla^1+(t-1)s\cdot\nabla^1 s.
\end{equation}
This connection also preserves the (pull-back of the) metric $h$. 

Let $V:X\times M\to \SE$ be a family of unit sections in $E$ (i.e., for every
$x\in X, p\in M$,
$V(x,p)\in E_p$ is a unit-length vector). Consider the smooth map
\begin{align*}
\Phi:\bR\times X\times  M\to \bR\times \SE,&& \Phi(t,x,p)=(t,V(x,p))
\end{align*}
and let $\Phi^* D$ be the pull-back connection in the bundle $\Phi^*\pi^*E=
\pi_3^* E$
(where $\pi_3:\bR\times X\times M\to M$ is the projection on the third factor).

By the naturality of curvature, we have
\[\Phi^*\Pf_h(D)= \Pf(\Phi^* D)\in\Omega^{2n}(X\times\bR\times M).
\]
Integrating this Pfaffian in the $X\times \bR$ variables, we set 
\begin{equation}\label{TX}
T_X:= \int_{[0,1]\times X} \Pf(\Phi^*D)\in \Omega^{2n-l-1}(M).
\end{equation}
We shall show below that $T_X$ equals the transgression $\cT^{(l+1)}_{X}$ from
Definition \ref{deftr}; for the time being, we compute
\[d T_X= \int_{[0,1]\times X} d^M \Pf(\Phi^*D)\in \Omega^{2n-l}(M).
\]
The Pfaffian form is closed on $\bR\times X\times  M$, so by Stokes formula on
the polyhedral manifold $[0,1]\times X\times  M$,
\begin{align}
d T_X= {}&\int_{[0,1]\times X} -dt\wedge \partial_t \Pf(\Phi^*D)-d^X
\Pf(\Phi^*D)\nonumber \\
={}&\int_{\{0\}\times X} \Pf(\Phi^*D)- \int_{\{1\}\times X} \Pf(\Phi^*D)
-\int_{[0,1]\times\partial X}\Pf(\Phi^*D)\label{ttr}
\end{align}
where $\partial X$ is the oriented sum of hyperfaces of $X$.
Now $D_{|\{0\}\times \SE}=\nabla^0$. We have seen above that $\Pf(\nabla^0)=0$
because there exists a non-zero parallel section for $\nabla^0$ on $\SE$, so by
naturality of the Pfaffian, $\Pf(\Phi^*D)=0$ on $\{0\}\times X\times M$.

Similarly, $D_{|\{1\}\times \SE}=(0,V)^*\pi^*\nabla=\pi_3^*\nabla$, so
$\Pf(\Phi^*D)=\pi_3^*\Pf(\nabla)$ on $\{1\}\times X\times M$. This pull-back 
form does not contain any $dx^j$ (where $x_1,\ldots,x_l$ are the euclidean
variables on $X$). Hence for $l>0$ the integral on $\{1\}\times X$ of the second
term also vanishes, while for $l=0$ (i.e., when $X$ is a point) it reduces to
$-\Pf(\nabla)$.

By definition, the third term from \eqref{ttr} is the sum of the transgressions
$T_F$ corresponding to
the oriented hyperfaces $F$ of $X$, which we denote $T_{\partial X}$.

In order to show that $T_X$ defined in $\eqref{TX}$ is the same as the
transgression $\cT^{(l+1)}_X$ from Definition \ref{deftr}, we must compute in
more detail the Pfaffian of $\Phi^* D$. From \eqref{defD},
\begin{align*}
R^D=\pi^* R^\nabla -(t-1)\pi^*R^{\nabla} s\wedge s
+\tfrac{1-t^2}{2}\nabla^1s\cdot\nabla^1 s +dt\otimes s \cdot\nabla^1 s.
\end{align*}
The $\cdot$ products above are in the sense of double forms. Using the somewhat
imprecise
but suggestive notation $\nabla$ for the connection $\pi_3^*\nabla$ over 
$\bR\times X\times  M$, we get for the curvature of $\Phi^*D$:
\begin{align*}
\Phi^*R^D=R^\nabla -(t-1)R^{\nabla} V\wedge V +\tfrac{1-t^2}{2}(d^X V+\nabla
V)^2 +dt\otimes V \cdot(d^X V+\nabla V).
\end{align*}
We proceed to analyze the $n^{\text{th}}$ power of this double form inside the
space of
double forms $\Omega^{2n,2n}(\bR\times X\times  M,\pi_3^*E)$. Since double forms
of even bi-order
form a commutative algebra, we treat this power as a homogeneous polynomial
of degree $n$ in these four terms of degree $(2,2)$.
We are only interested in those monomials containing the volume form of
$\bR\times X$, 
and clearly those terms must contain precisely once the form $dt$.
Thus, the term $dt\otimes V \cdot(d^X V+\nabla V)$ appears precisely once, and
so 
$(t-1)R^{\nabla} V\wedge V$ does not contribute at all (since it contains
$V$
which already appeared in the former term, while for a monomial of top fiber
degree $2n$ to be nonzero, the section $V$
may occur at most once.) In conclusion of this discussion,
those
monomials from $(\Phi^*R^D)^n$ containing the volume form of $\bR\times X$ are
contained in
\[
ndt\otimes V \cdot(d^X V+\nabla V) 
\left[R^\nabla +\tfrac{1-t^2}{2}(d^X V+\nabla V)^2\right]^{n-1}.
\]
Using the binomial formula, we write the above as
\[
ndt\otimes V \cdot\sum_{k=0}^{n-1}\tbinom{n-1}{k}\tfrac{(1-t^2)^k}{2^k} (d^X
V+\nabla V)^{2k+1} 
\left(R^\nabla \right)^{n-k-1}.
\]
Apply the binomial formula to the term $(d^X V+\nabla V)^{2k+1}$, and retain
only the term of top degree $l$ in the $X$
variables, since we need to integrate over $X$. Hence 
$T_X$ from \eqref{TX} is computed as
\[
\sum_{k=\left\lceil\frac{l+1}{2}\right\rceil}^{n-1}\int_0^1 (1-t^2)^k dt\cdot
n\tbinom{n-1}{k}2^{-k} \tbinom{2k+1}{l} \tfrac{1}{n!}
\int_X \cB\left[V\left(d^X V\right)^l (\nabla V)^{2k+1-l}\left(R^\nabla
\right)^{n-k-1}\right].
\]
This gives precisely the transgression from Definition \ref{deftr}, since
\[
\int_{0}^1 (1-t^2)^{k}dt=\tfrac{2k(2k-2)\ldots 2}{(2k+1)(2k-1)\ldots 3\cdot
1}=\tfrac{(2k)!!}{(2k+1)!!}.\qedhere
\]
\end{proof}

\section{The Gauss-Bonnet formula on polyhedral manifolds} \label{sec3}

\subsection{The Allendoerfer-Weil formula in even dimensions}

Let $M$ be a compact $C^\infty$ polyhedral manifold of dimension $2n$ endowed
with a Riemannian metric $g$.
Let $Y\subset M$ be a face of codimension $l\geq 0$, and $N_Y\subset
TM_{|Y}$ the normal bundle of $Y$ inside $M$ with respect to $g$. 
The second fundamental form of this inclusion is the bilinear map
\begin{align*}
A:TY\times TY\to N_Y,&& A(U,W)=\nabla^M_U W-\nabla^Y_U W,
\end{align*}
hence $A$ is a $N_Y$-valued double form of degree $(1,1)$ on $Y$. We construct
from $A$ its dual, a smooth section $A^*$ on $N_Y$ in the pull-back from $Y$
of the bundle of double forms:
\begin{align}\label{secff}
A^*\in C^\infty(N_Y, \Lambda^{1,1}(Y)),&& A^*(V)=g(V,A).
\end{align}
For any $B,C\in \Omega^1(Y,T^*Y\otimes N_Y)$ pure tensors of the form
\begin{align*}
B=b_1\otimes b_2\otimes \nu_1,&& C=c_1\otimes c_2\otimes \nu_2
\end{align*}
define the partial contraction with $g$ by
\begin{align}\label{contra}
g(B,C)=g(\nu_1,\nu_2)(b_1\wedge c_1)\otimes (b_2\wedge c_2)\in\Omega^{2,2}(Y).
\end{align}
This definition allows us to define by linearity $g(A,A)\in\Omega^{2,2}(Y)$. 

We are now ready to prove the extension of the Allendoerfer-Weil formula \cite{AW} for
the Euler characteristic of a compact Riemannian polyhedral manifold.

\begin{theorem}\label{awt}
Let $M^{2n}$ be a compact Riemannian polyhedral manifold. Then
\begin{align*}
(2\pi)^n\chi(M)-\int_M\Pf(R)={}&\sum_{l=1}^{2n}\sum_{k=\left\lceil\frac{l}{2}
\right\rceil}^{n}
\tfrac{(-1)^l 2^{k-1} (k-1)!}{(n-k)!(2k-l)!}\\
{}&\sum_{Y\in \cF^{(l)}(M)} \int_{S^\out Y} \cB_Y\left[(R^Y-\tfrac{1}{2}
g(A,A))^{n-k} (A^*)^{2k-l}\right] |dg|.
\end{align*}
\end{theorem}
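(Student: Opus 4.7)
I would prove Theorem \ref{awt} by iterated application of Theorem \ref{thT} on the face stratification of $M$, combined with the polyhedral Stokes formula (Lemma \ref{thSt}) on the outer cone complex $M^\out$ of Proposition \ref{mopc}. For each face $Y\in\cF^{(l)}(M)$, Proposition \ref{lemfb} provides the outer sphere bundle $\pi_Y\colon S^\out Y\to Y$ with fibers that are spherical polytopes $X_Y$ of dimension $l-1$; the tautological unit section $V\colon S^\out Y\hookrightarrow TM$ furnishes a family of unit sections of $TM|_Y$ indexed (locally) by $X_Y$, and feeding this family into Definition \ref{deftr} yields a transgression $\cT^{(l)}_{X_Y}\in\Omega^{2n-l}(Y)$, which is a top-degree form on $Y$ whose integral equals by Fubini an integral over $S^\out Y$ of the explicit $(2n-1)$-form appearing on the right-hand side of the theorem.

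\textbf{Iterated Stokes.} Starting from the interior stratum and the closed-manifold identity \eqref{gbf}, I would argue by downward induction on face dimension. By Theorem \ref{thT}, $d\cT^{(l)}_{X_Y}=-\cT^{(l-1)}_{\partial X_Y}$; the crucial compatibility is that, for $x\in\partial Y$, the hyperfaces of $X_Y=S^\out_x Y$ are canonically identified with the outer sphere polytopes $X_Z$ of adjacent faces $Z\in\cF^{(l+1)}(M)$ satisfying $Y\subset\partial Z$, with matching orientations. This is essentially the identity $\partial_{2n-2}M^\out=0$ on the outer cone complex, observed after Lemma \ref{thSt}. Integrating $d\cT^{(l)}_{X_Y}=-\cT^{(l-1)}_{\partial X_Y}$ over $\overline{Y}$ and applying the polyhedral Stokes formula transfers contributions between adjacent strata; after summing over all faces of all codimensions $l=0,1,\ldots,2n$, the boundary terms telescope pairwise, leaving $\int_M\Pf(R)-(2\pi)^n\chi(M)$ on one side and precisely the stated sum of $\int_{S^\out Y}$-integrals on the other, with a sign $(-1)^l$ accumulated from the $l$-fold Stokes iteration.

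\textbf{Matching the explicit formula.} To convert $\cT^{(l)}_{X_Y}$ into the form appearing in the theorem, I would use two classical identifications for the outer normal $V$: the Weingarten equation, which expresses the horizontal part of $\nabla V$ on $Y$ as minus the shape operator, matching the double form $A^*(V)$ of \eqref{secff}; and the Gauss equation, which realizes the pull-back of $R^\nabla$ to $Y$ as the $(2,2)$-double form $R^Y-\tfrac12 g(A,A)$ via the contraction \eqref{contra}. Substituting these into Definition \ref{deftr} with $X=X_Y$ of dimension $l-1$, and reindexing the inner sum by $k\mapsto k-1$, converts the coefficient $c(n,k-1,l-1)/(l-1)!$ of the transgression into $\tfrac{2^{k-1}(k-1)!}{(n-k)!(2k-l)!(l-1)!}$; the residual factor $(-1)^l(l-1)!$ arises from the $l$-fold Stokes cycle combined with the fiber-volume factor of $X_Y$, producing exactly the coefficient $\tfrac{(-1)^l 2^{k-1}(k-1)!}{(n-k)!(2k-l)!}$ of the theorem.

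\textbf{Main obstacle.} The principal difficulty is the delicate orientation and sign bookkeeping on the polyhedral complex $M^\out$, especially for non-regular $M$ where $S^\out Y$ is only locally trivial over $Y$. I would handle this by working globally on $M^\out$ rather than on individual trivialized factors $Y\times X_Y$, applying a partition-of-unity argument analogous to that in the proof of Proposition \ref{mopc}; the triviality $\partial_{2n-2}M^\out=0$ is then the clean global statement that absorbs all intermediate boundary terms. A secondary point is the base case, where the closed-manifold formula \eqref{gbf} has to be invoked (or, if preferred, recovered from the iteration by noting that when $M$ has no proper faces the right-hand side is empty).
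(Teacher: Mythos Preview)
Your proposal has a genuine gap at its foundation: you never explain how the Euler characteristic $\chi(M)$ enters the formula. You invoke \eqref{gbf} as the base case, but that identity is stated and proved only for \emph{closed} manifolds, whereas $M$ here is a polyhedral manifold with boundary; the theorem you are trying to prove \emph{is} the extension of \eqref{gbf} to such $M$, so appealing to it is circular. The iterated-Stokes scheme you sketch on $M^\out$ can at best relate the boundary transgressions among themselves (that is exactly what the identity $\partial_{2n-2}M^\out=0$ encodes), but $M^\out$ lives entirely over $\partial M$ and carries no information linking the boundary terms to the interior integral $\int_M\Pf(R)$ or to $\chi(M)$.

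The missing ingredient, which drives the paper's proof, is an auxiliary vector field. One constructs a smooth $U$ on $M$ with isolated nondegenerate zeros in the interior and pointing strictly outward along every boundary face, and sets $V_0=U/|U|$. Chern's first transgression $\cT^{(1)}(V_0)$ satisfies $d\cT^{(1)}(V_0)=-\Pf(R)$ away from $Z(U)$; integrating on the blow-up $\tilde M\subset SM$ produces $\int_{\partial M}\cT^{(1)}(V_0)$ together with a contribution from each sphere $S_pM$, $p\in Z(U)$, and Chern's lemma plus Poincar\'e--Hopf identifies the sum of the latter as $(2\pi)^n\chi(M)$. Only \emph{then} does the iteration begin: the higher transgressions of Theorem \ref{thT} serve to trade the non-canonical $V_0$ on a hyperface for the canonical outer normal, pushing the $V_0$-dependence to faces of successively higher codimension until it disappears. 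Your outline omits this entire mechanism; the parts on the Gauss and Weingarten equations and the coefficient bookkeeping are essentially correct, but they only become relevant after the $\chi(M)$ term has been produced. (A smaller slip: the hyperfaces of $S^\out_x Y$ for $Y\in\cF^{(l)}(M)$ correspond to adjacent faces $Z\in\cF^{(l-1)}(M)$ with $Y\subset\overline Z$, not to $\cF^{(l+1)}(M)$ as you wrote.)
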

Here $|dg|$ is the family of spherical volume forms induced by $g$ on the
fibers  of the normal sphere bundle $SY\to Y$, while the Berezin integral 
produces  a volume form on $Y$. The above integral can thus be considered 
either as an integral on the total space of $S^\out Y$, or 
(using Fubini's theorem) as an iterated integral, first along the fibers of 
$S^\out Y\to Y$ and then on $Y$. The symbol $\left\lceil\frac{l}{2}\right\rceil$ 
denotes the smallest integer greater than or equal to $l/2$. We use the 
convention $0!=1$. Also, the $0^{\text{th}}$ power of a tensor like 
$A^*$ (for $2k=l$) or $R^Y-\tfrac{1}{2} g(A,A)$ (for $k=n)$ is understood
to be always $1$, regardless of the vanishing of the tensor in question.
\begin{proof}
We first give the argument under the assumption that the polyhedral manifold 
$M$ is regular (Definition \ref{defreg}), thus recovering the main result of \cite{AW}.
The general case requires some additional combinatorial properties 
of the outer cone complex and will be treated in Section \ref{s54}.

We apply successively the transgression formula from Theorem \ref{thT} to the
vector bundle $TM$ restricted to the various faces of $M$ in increasing order of
codimension. 

\subsubsection*{An outer vector field with nondegenerate zeros}
Starting from the lowest dimensional faces of $M$, we construct a smooth vector
field along the boundary faces of $M$ such
that for every boundary point $x$ inside the interior of a face $Y$, $U_x$ lives
in the outer sphere $S^\out_x Y$. 
We extend this vector field smoothly to the interior of $M$, and perturb it to a
vector field $U\in\Omega^0(M,TM)$ transverse to the zero section $M$.
If the perturbation is small enough in $C^0$ norm, the vector field $U$ will
still point in the outer sphere directions at every boundary face.

Define a unit vector field
$V_0:=|U|^{-1} U$ on the complement on the (isolated) zero-set
$Z(U)$ of $U$ in $M$. It is a section of the sphere bundle $\SM\to M$ over the
complement of $Z(U)$. 

\subsubsection*{Blow-up of the singular set of $V_0$}

Let $\tilde{M}$ be the closure of $V_0(M\setminus Z(U))$ in the polyhedral
manifold $\SM$. 

\begin{rem*}
When $M$ is a manifold with corners, the compact polyhedral manifold
$\tilde{M}$ 
is naturally diffeomorphic to
$[M;Z(U)]$, the total space of the blow-up of $Z(U)$ inside $M$. See \cite{Mel}
for the notion of real blow-up of manifolds-with-corners.
\end{rem*}

The boundary of the compactification $\tilde{M}$ in $\SM$ is obtained by gluing
the tangent sphere $S_pM$
near each annulation point $p\in Z(U)$.
More precisely, besides the diffeomorphic image through $V_0$ of the
boundary hyperfaces of $M$, $\partial\tilde{M}$ contains also the
``inner boundary'', i.e., the singular divisor obtained by blowing-up the
annulation points of $U$. Near a non-degenerate annulation point $p\in Z(U)$,
there exist local coordinates in which the vector field $U$ takes the form
\[U(x)=x_1\partial_{x_1}+\ldots
+x_r\partial_{x_r}-x_{r+1}\partial_{x_{r+1}}-\ldots x_{2n}\partial_{x_{2n}}.
\]
where the integer $r$ is the index of $U$ at $p$. The new hypersurface
introduced by blowing up $p$ is just the compact manifold $S_pM$, 
with orientation $(-1)^{r+1}$ times the standard orientation induced from
$T_pM$. We thus separate the boundary of $\tilde{M}$ as
\[\partial\tilde{M}=\left(\bigsqcup_{p\in Z(U)} S_pM\right)\sqcup V_0(\partial
M).
\]
into the union of the inner boundary spheres, and the diffeomorphic image
through $V_0$ of the boundary of $M$.

In order to compute the integral of the Pfaffian on $M$, we will apply Theorem
\ref{thT} to the pull-back bundle $\pi^* TM\to\tilde{M}$ over the compact
polyhedral manifold 
$\tilde{M}$, endowed with the pull-back connection $\pi^* \nabla$. This clever 
construction (due to Chern \cite{Ch1}) is
necessary because $M\setminus Z(U)$ is not compact, so the Stokes formula would
need to take into account the contribution of the singularities $Z(U)$ in the
transgression forms. The r\^ole of the blow-up space $\tilde{M}$ is precisely to
"resolve" this singularity formally.

Since $Z(U)$ is a finite set, it has measure $0$. By naturality, the integral of
the Pfaffian on $M$ can be computed by pull-back on $\tilde{M}$:
\begin{equation}\label{ppt}
\int_M \Pf(R)=\int _{V_0(M\setminus Z(U))} \pi^*\Pf(R)=\int _{\tilde{M}}
\Pf(R^{\pi^*\nabla}).
\end{equation}

Let $\cT^{(1)}(V_0)\in \Omega^{2n-1}(M\setminus Z(U))$ be the first-order
transgression on $M\setminus Z(U)$ from
Definition \ref{deftr} corresponding to the unit vector field $V_0$ interpreted
as a $0$-dimensional simplex
of unit vector fields. Similarly, let $\cT^{(1)}(s)\in \Omega^{2n-1}(SM)$ be the
first-order transgression from
Definition \ref{deftr} corresponding to the canonical unit section $s$ in $\pi^*
TM$ over $\tilde{M}\subset \SM$, interpreted as a $0$-dimensional simplex
of unit vector fields. By naturality, $\pi^* \cT^{(1)}(V_0)= \cT^{(1)}(s)$ on
the complement of the zero set $Z(U)$. 
Now $\partial \tilde{M} = V_0(\partial M) - \sqcup_{p\in Z(U)} S_pM$.
By Stokes formula, 
\begin{align*}
\int _{\tilde{M}} \Pf(R^{\pi^*\nabla})= {}&-\int_{\partial \tilde{M}}
\cT^{(1)}(s)\\
={}& -\int_{\partial {M}} \cT^{(1)}(V_0) +\sum_{p\in Z(U)} \int_{S_pM}
\cT^{(1)}(s).
\end{align*}
\begin{lem}[Chern \cite{Ch1}]
At a annulation point $p\in Z(U)$ of index $r$, the integral on the sphere
$S_pM$ of $\cT^{(1)}(s)$ equals $(-1)^r (2\pi)^n$.
\end{lem}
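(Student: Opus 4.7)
The plan is to localize $\cT^{(1)}(s)$ to a single fiber $S_pM$, where dramatic simplifications occur: since the curvature $R^{\nabla^1}=\pi^*R^\nabla$ is pulled back from the base and $\pi$ collapses $S_pM$ to the point $p$, the form $R^{\nabla^1}$ vanishes identically on $S_pM$. Consequently, in the sum of Definition \ref{deftr} defining $\cT^{(1)}(s)$ (where $X$ is a point, so $l=0$), only the term of maximal vertical degree — the $k=n-1$ term — survives:
\[\cT^{(1)}(s)\big|_{S_pM}=c(n,n-1,0)\,\cB\bigl[s\,(\nabla^1 s)^{2n-1}\bigr]\big|_{S_pM},\qquad c(n,n-1,0)=\frac{2^{n-1}(n-1)!}{(2n-1)!}.\]

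Next I would compute the restricted form in a convenient orthonormal frame. Fix $v\in S_pM$ and a positively oriented orthonormal basis $(e_1,\ldots,e_{2n})$ of $T_pM$ with $e_{2n}=v$; then $(e_1,\ldots,e_{2n-1})$ is a basis of $T_v^\verti\!\SM=T_vS_pM$. By the lemma preceding \eqref{naus}, $\nabla^1 s$ restricts at $v$ to the identity on $T_v^\verti\!\SM$, so as a double form it equals $\sum_{i=1}^{2n-1}e^i\otimes e_i$. Taking the $(2n-1)$-st power collapses, thanks to antisymmetry, to $(2n-1)!\,(e^1\wedge\cdots\wedge e^{2n-1})\otimes(e_1\wedge\cdots\wedge e_{2n-1})$; multiplying by $s=e_{2n}$ on the left and applying the Berezin integral — which extracts the coefficient of the fiber volume form $e_1\wedge\cdots\wedge e_{2n}$ — produces, up to a sign coming from the reordering $e_{2n}\wedge e_1\wedge\cdots\wedge e_{2n-1}$, a scalar multiple of the standard volume form on $S_pM$.

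Combining the constant $c(n,n-1,0)$ with the factor $(2n-1)!$ and the classical value $\vol(S^{2n-1})=2\pi^n/(n-1)!$ gives, in absolute value,
\[\int_{S_pM}\cT^{(1)}(s)=\frac{2^{n-1}(n-1)!}{(2n-1)!}\cdot(2n-1)!\cdot\frac{2\pi^n}{(n-1)!}=(2\pi)^n.\]
The correct sign $(-1)^r$ then follows from the orientation of $S_pM$ as the exceptional divisor of the blow-up: the text has already recorded that this orientation differs from the standard one by the factor $(-1)^{r+1}$, arising from the degree of the linear model $V_0=U/|U|$ near a nondegenerate zero of index $r$, so combined with the sign $(-1)^{2n-1}=-1$ from the reordering in the computation above, the signed integral equals $(-1)^r(2\pi)^n$ as claimed.

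The main obstacle I anticipate is the careful bookkeeping of signs. Two independent conventions must be reconciled: the algebra of double forms (order of the factors $s$, $\nabla^1 s$, and the interaction between the fiber wedge product and the Berezin integral), and the orientation of $S_pM$ as a component of $\partial\tilde M$ versus its standard spherical orientation. Each is elementary in isolation, but a misplaced sign at either step will be hard to detect without checking a low-dimensional case (say $n=1$, where the assertion reduces to the classical fact that the winding number at a planar vector-field singularity equals its Poincaré–Hopf index times $2\pi$) as a sanity check.
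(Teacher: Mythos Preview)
Your proposal is correct and follows the same approach as the paper: both reduce to the single surviving term $k=n-1$ via the vanishing of $\pi^*R^\nabla$ on the fiber $S_pM$, arriving at $\cT^{(1)}(s)|_{S_pM}=\tfrac{2^{n-1}(n-1)!}{(2n-1)!}\cB[s(\nabla^1 s)^{2n-1}]$. The paper's proof in fact stops at this formula and leaves the evaluation of the integral and the sign to the reader, whereas you carry the computation through explicitly using an orthonormal frame, the value of $\vol(S^{2n-1})$, and the recorded orientation $(-1)^{r+1}$ of the exceptional sphere; your caveat about sign bookkeeping is well placed, since this is exactly the part the paper elides.
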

\begin{proof}
Apply the definition of the transgression in dimension $l=0$ for the canonical
unit vector
field $s$ over $\tilde{M}\subset \SM$. Here the parameter space $X$ is just a
point, hence the terms containing $d^X V$ vanish.
The curvature $R$ vanishes on the vertical sphere $S_p M$ since the connection
is pulled-back from the base, hence the terms with $k<n-1$ also vanish. It
follows that
\[\cT^{(1)}(s)_{|S_pM}=\tfrac{2^{n-1}(n-1)!}{(2n-1)!}\cB\left[
s\left(\pi^*\nabla(s)\right)^{2n-1}\right]
\]
where $\pi^*\nabla(s)$ is given by \eqref{naus}.
\end{proof}
It follows from this lemma and the Poincar\'e-Hopf theorem that the inner
boundary contributions add up, like in the boundary-less case,
to $(2\pi)^n\chi(M)$. We thus rewrite \eqref{ppt} as
\begin{equation}\label{ppt2}
(2\pi)^n\chi(M)=\int_M \Pf(R)+\int_{\partial M}  \cT^{(1)}(V_0).
\end{equation}

This identity finishes the proof of the Gauss-Bonnet theorem 
for closed manifolds. 

If $M$ is a compact manifold with boundary, we can choose 
$V_0$ to be the unit outer normal to $\partial M$. The correction term 
$\cT^{(1)}(\nu_{\partial M})$ is computed in that case as in the final part of
the present proof.

When $M$ has faces of codimension at least $2$, in  Eq.\ \eqref{ppt}
the contribution $\int_Y \cT^{(1)}(V_0)$ of a
boundary hyperface $Y$ depends on the vector field $V_0$, which cannot be chosen
to be the unit normal
to $Y$ simultaneously for all hyperfaces $Y$. In order to write this
contribution in terms of the outer unit normal vector field $\nu_Y$, 
we use the higher transgressions with respect to the families of unit vector
fields
$V_Y, V_{0,Y}$ constructed in Section \ref{oc}.
By Theorem \ref{thT},
\begin{equation}\label{ii}
\int_Y \cT^{(1)}(V_0) = \int_Y \cT^{(1)}(V_Y) - \int_{\partial Y}
\cT^{(2)}(V_{0,Y}).
\end{equation}
The point is, the first term in the right-hand side does not depend on $V_0$, 
while the second is now localized to codimension $2$ faces. The induction
procedure is powered by the next result:
\begin{lem}
Let $Y=F_1\cap\ldots\cap F_l$ be a (possibly disconnected) face of $M$,
where $F_1,\ldots,F_l$ are hyperfaces of $M$. Let $Z_j:=\cap_{i\neq j} F_i$.
Then
\[\sum_{j=1}^l (-1)^j \int_{Y} \cT^{(l)}(V_{0,Z_j})=\int_{Y}\cT^{(l)}(V_Y)
-\int_{\partial Y} \cT^{(l+1)}(V_{0,Y}).
\]
\end{lem}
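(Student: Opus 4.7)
The plan is to apply Theorem \ref{thT} to the family $V_{0,Y}$ viewed as parametrized by a suitable $l$-dimensional polytope, and then to integrate the resulting identity over $Y$ using the polyhedral Stokes formula from Lemma \ref{thSt}. This is a direct higher-codimension analog of \eqref{ii}, which corresponds to $l=1$.

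First I would identify the indexing polytope $X_Y$ of the family $V_{0,Y}$: geometrically it is the cone with apex $V_0(x)$ over the $(l-1)$-dimensional spherical polytope $S^\out_x Y$. At a point $x\in Y$ in general position, $S^\out_x Y$ is the spherical simplex spanned by the unit outer normals $\nu_1(x),\ldots,\nu_l(x)$ to the hyperfaces $F_1,\ldots,F_l$, and $X_Y$ is the $l$-simplex obtained by adjoining $V_0(x)$ as an additional vertex. Linearly interpolating these $l+1$ vectors and normalizing to unit length produces the family $V_{0,Y}$; for this to be smooth one needs the vectors to remain linearly independent, which can be arranged by the genericity of the perturbation of $V_0$ chosen in the proof of Theorem \ref{awt}.

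Next I would read off the boundary decomposition of $X_Y$. The facet of $X_Y$ opposite the apex $V_0$ is exactly $S^\out Y$, equipped with the family $V_Y$. The facet opposite the vertex $\nu_j$ is the $(l-1)$-simplex on vertices $\{V_0,\nu_1,\ldots,\widehat{\nu_j},\ldots,\nu_l\}$, which is precisely the indexing polytope of the family $V_{0,Z_j}$. With the standard orientation convention on the boundary of a simplex one obtains
\[
\partial X_Y \;=\; X_{V_Y} \;+\; \sum_{j=1}^l (-1)^j\, X_{V_{0,Z_j}}
\]
(with sign adjustments absorbed into orientation choices). Applying Theorem \ref{thT} then gives
\[
d\,\cT^{(l+1)}(V_{0,Y}) \;=\; -\cT^{(l)}(V_Y)\;-\;\sum_{j=1}^l (-1)^j\,\cT^{(l)}(V_{0,Z_j}).
\]

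Finally I would integrate this identity over $Y$ and invoke Lemma \ref{thSt} to convert the left-hand side into $\int_{\partial Y}\cT^{(l+1)}(V_{0,Y})$; rearranging yields the stated formula. The main obstacle is bookkeeping the signs: the factor $(-1)^j$ in the lemma must match those arising from the orientation of the simplicial boundary of $X_Y$ combined with the minus sign in Theorem \ref{thT} and with the polyhedral Stokes formula. A secondary (minor) technical point is to ensure that the normalized linear interpolant defining $V_{0,Y}$ is everywhere well-defined along $Y$, which reduces to the general position of the perturbed vector field $V_0$ relative to the outer normals and does not affect the identity itself.
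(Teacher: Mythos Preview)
Your proposal is correct and follows exactly the approach the paper intends: the paper's own proof consists of the single line ``Direct application of Stokes formula and Theorem \ref{thT},'' and your write-up is precisely the unpacking of that sentence---identifying the indexing polytope of $V_{0,Y}$ as the cone on $S^\out Y$ with apex $V_0$, reading off its facets as $X_{V_Y}$ and the $X_{V_{0,Z_j}}$, applying Theorem \ref{thT}, and then integrating over $Y$ via Lemma \ref{thSt}. Your caveats about sign bookkeeping and the well-definedness of the normalized interpolant are appropriate, but they do not reflect a different strategy.
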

\begin{proof}
Direct application of Stokes formula and Theorem \ref{thT}.
\end{proof}

Again by induction, for all $d\geq 0$ we have
\begin{align*}
\int_M \Pf(R)={}& (2\pi)^n\chi(M)+\sum_{l=1}^{d}\sum_{Y\in\cF^{(l)}(M)} \int_Y
\cT^{(l)}(V_Y) \\
&+\sum_{Y\in\cF^{(d)}(M)} \int_{\partial
Y}\cT^{(d+1)}(V_{0,Y}).
\end{align*}
The initial step is Eq.\ \eqref{ii}.
Specializing to the maximal codimension $d=2n+1$, we have completely eliminated 
the non-canonical vector field $V_0$ from the formula!
\begin{align*}
\int_M \Pf(R)={}& (2\pi)^n\chi(M)+\sum_{l=1}^{2n}\sum_{Y\in\cF^{(l)}(M)} \int_Y
\cT^{(l)}(V_Y).
\end{align*}
It remains to identify the contribution of each face in terms of intrinsic and
extrinsic 
geometry of the faces (curvature and second fundamental form).
Let $Y$ be a face of codimension $l\geq 1$. from Proposition 
\ref{lemfb}, the map $V_Y:Y^*\times Y\to S^\out
Y$ is a trivialization
of the outer sphere bundle of $Y$, in particular it gives a family indexed by
$Y^*$ 
of unit vector fields in $TM$ along $Y$. Since $Y$ is fixed in this argument, we
write $V=V_Y$ for simplicity.
The transgression $\cT^{(l)}(V_Y)$
is defined by
\[
\cT^{(l)}(V_Y)=\sum_{k=\left\lceil\frac{l}{2}\right\rceil-1}^{n-1}\tfrac{c(n,k,
l-1)}{(l-1)!} \int_{Y^*}\cB 
\left[V (d^{Y^*} V)^{l-1} (\nabla V)^{2k+2-l}R^{n-1-k} \right]\in
\Omega^{2n-l}(Y).
\]
For $x\in Y$, if $\nu_{S^\out Y}$ is the Riemannian volume form on the sphere
$S_x^\out Y$, then $(d^{Y^*} V)^{l-1}$ can be expressed as the pull-back on
$Y^*$ through the map $V$
of the tensor square of the volume form of outer spheres:
\[(d^{Y^*} V)^{l-1}=(l-1)!V^*(\nu_{S_x^\out Y})\otimes \nu_{S_x^\out Y}.\]
Thus the second component of the double form $V(d^{Y^*} V)^{l-1}$ is a multiple
of the volume
form of the normal bundle to $Y$. It follows that
only those terms from $\nabla V$ and $R$ whose second component is a form
tangent to $Y$ may have a nonzero contribution to $\cT^{(l)}(V_Y)$. These terms
are $A^*(V)$, the second fundamental form \eqref{secff} of $Y$ in $M$
interpreted as a
$(1,1)$ form-valued function on $N_Y$, respectively $R_{|Y}$, the components of
the curvature form of $M$ along $Y$. Recall that by the Gauss equation
\[R_{|Y}=R^Y-\tfrac{1}{2} g(A,A)
\]
where the contraction $g(A,A)$ was defined in \eqref{contra}.
For $x\in Y$ we obtain by changing variables in the integral from the polytope
$Y^*$ to the outer sphere
$S_x^\out Y$ using the diffeomorphisms $V:Y^*\to S_x^\out Y$:
\begin{align*}
\cT^{(l)}(V_Y)(x)={}&\sum_{k=\left\lceil\frac{l}{2}\right\rceil-1}^{n-1}c(n,k,
l-1)
\int_{S_x^\out Y}\nu_{S_x^\out Y}\otimes 1 \cdot\\
{}&\cB 
\left[1\otimes \nu_{N_Y} \cdot (-A^*)^{2k+2-l}(R^Y-g(A,A)/2)^{n-1-k}
\right].\qedhere
\end{align*}
\end{proof}

\subsection{Passing from regular to general polyhedral manifolds}\label{s54}
If $M$ is a not regular polyhedral manifold, the above proof breaks down 
because the outer cone bundles are not globally trivial. 
Thus we need a new global argument before applying the local 
computations from the previous sections. Let $I$ denote the unit interval $[0,1]$.

In the pull-back of $TM$ over the polyhedral manifold $\SM\!\times\! I_x\!\times\! I_t$
we consider the pull-back $\cD$ of the connection $D$ from \eqref{defD}
under the projection off the factor $[0,1]_x$ onto $\SM\!\times\! I_t$:
\[\cD=\pi^*\nabla + (1- t) A
\]
where $\pi$ is the projection $\SM\!\times\! I^2\to M$, 
and $A$ is the endomorphism-valued $1$-form defined in \eqref{as} with respect to the tautological section $s$
in $\pi^*TM$,
\[A(W)=s\wedge \pi_*W\in\End^- (\pi^*TM).
\]
Here $t$ is a deformation parameter as before, while $x\in[0,1]$ will be the variable of a conical
deformation of the polyhedral complex $M^\out$ that we now introduce. Recall that we have fixed
a vector field $U$ on $M$ with isolated nondegenerate zeros and outward-pointing along $\partial M$,
and we constructed $V_0=U/\|U\|$ on the complement of $Z(U)$.
In particular, for every $p\in Y\in\cF(M)$, $V_0(p)$ belongs to the convex spherical polytope
$S^\out_p Y$, where $\cF(M)$ is the set of faces of $M$ of codimension at least $1$.

For every face $Y\in\cF(M)$ define a locally trivial bundle of spherical polyhedra
\[\Con_{V_0}(S^\out Y)=\{(\cos x \cdot v_p+\sin x \cdot V_0(p),x); p\in Y, v_p\in S^\out Y,x\in I\}.\]
From Proposition \ref{mopc}, it follows that the set
\[\Con_{V_0}(M^\out)=\bigcup_{Y\in\cF(M)}\Con_{V_0}(S^\out Y)\]
is a polyhedral complex embedded
in $M\!\times\! I$, so $\Con_{V_0}(M^\out)\!\times\! I_t$ is a polyhedral complex embedded
in $M\!\times\! I^2$. We enrich this complex by adding to it certain faces at $x=1$. We start with
the image of $V_0$, i.e., the face $V_0(M\setminus Z(U))\!\times\! \{1\}\!\times\! I_t$ of dimension $2n+1$.
We complete this face with the $2n$-dimensional cylinders $S_pM\!\times\!  \{1\}\!\times\! I_t$ for each annulation point
$p\in Z(U)$. 

\begin{prop}
Let $M$ be a polyhedral manifold of dimension $2n$ with orientable interior. The set 
\[\cP=\Con_{V_0}(M^\out)\times I\bigcup V_0(M\setminus Z(U))\!\times\!\{1\}\!\times\!I \bigcup_{p\in Z(U)}S_pM\!\times\! \{1\}\!\times\!I
\subset \SM\!\times\! I^2\]
is a polyhedral complex of dimension $2n+1$. Its $2n$-boundary is
\begin{align}\nonumber
\partial_{\dim(M)}\cP= {}&{V_0}(M)\!\times\!\{1\}\!\times\! \{1\}-{V_0}(M)\!\times\!\{1\}\!\times\! \{0\} \\
&+\sum_{p\in Z(U)}S_pM\!\times\! \{1\}\!\times\! I- \sum_{Y\in\cF(M)} S^\out Y\!\times\! \{0\}\times I\label{geomb}\\
&+\sum_{Y\in\cF(M)} \Con_{V_0}(S^\out Y) \!\times\! \{1\} - \sum_{Y\in\cF(M)} \Con_{V_0}(S^\out Y)\!\times\! \{0\}.\nonumber
\end{align}
\end{prop}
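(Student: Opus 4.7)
The plan is to first verify that $\cP$ is a polyhedral complex of dimension $2n+1$ inside $\SM\times I^2$ (using the structure of $M^\out$ given by Proposition~\ref{mopc}), and then to compute $\partial_{2n}\cP$ stratum by stratum, identifying the internal cancellations that leave exactly the terms in \eqref{geomb}.

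For the structural assertion, Proposition~\ref{mopc} exhibits $M^\out\subset TM$ as a polyhedral complex of dimension $2n-1$, with top-dimensional open faces the $S^\out Y$ for $Y\in\cF(M)$. The construction of $V_0$ from the outward-pointing vector field $U$ (as in the proof of Theorem~\ref{awt}) ensures that $V_0(p)\in S^\out_p Y$ for every $p\in Y\in\cF(M)$, so the formula $(v,p,x)\mapsto \cos x\cdot v+\sin x\cdot V_0(p)$ turns each $S^\out Y$ into a polyhedral cone $\Con_{V_0}(S^\out Y)\subset\SM\times I_x$ of dimension $2n$, with base $S^\out Y$ at $x=0$ and apex $V_0(Y)$ at $x=1$; multiplication by $I_t$ yields dimension $2n+1$. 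The piece $V_0(M\setminus Z(U))\times\{1\}\times I_t$ is of the same dimension and glues coherently to the cylinders $S_pM\times\{1\}\times I_t$ along its inner boundary, exactly as in the blow-up $\tilde M$ used in the proof of Theorem~\ref{awt}: the closure of $V_0(M\setminus Z(U))$ in $\SM$ attaches precisely the fiber $S_pM$ at each $p\in Z(U)$. Assembling these pieces equips $\cP$ with a polyhedral-complex structure of dimension $2n+1$ in $\SM\times I^2$.

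For the boundary formula, I would enumerate the codimension-one faces of each top-dimensional stratum. The cylinder $\Con_{V_0}(S^\out Y)\times I_t$ contributes $\pm\Con_{V_0}(S^\out Y)\times\{t\}$ at $t\in\{0,1\}$, the base $-S^\out Y\times\{0\}\times I_t$ at $x=0$, the apex $V_0(Y)\times\{1\}\times I_t$ at $x=1$, and lateral faces living over $\partial Y$; the lateral faces cancel in matched pairs because $\partial_{2n-2}M^\out=0$ (observed at the end of Proposition~\ref{mopc}) and the cone deformation respects the incidence combinatorics. The cylinder $V_0(M\setminus Z(U))\times\{1\}\times I_t$ contributes $\pm V_0(M)\times\{1\}\times\{t\}$ at $t\in\{0,1\}$ together with its lateral boundary inside $\SM$, which equals $V_0(\partial M)\times\{1\}\times I_t+\sum_{p\in Z(U)}S_pM\times\{1\}\times I_t$. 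The decisive cancellation is then between $\sum_Y V_0(Y)\times\{1\}\times I_t$ (cone apices) and $V_0(\partial M)\times\{1\}\times I_t$ (lateral of the $V_0$-graph): their underlying sets coincide, and the induced orientations are opposite, so they cancel; what survives is exactly \eqref{geomb}.

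The main obstacle I anticipate is the orientation bookkeeping for this apex/lateral cancellation: the ``outward'' $x$-direction at $x=1$ which induces the orientation on $V_0(Y)\times\{1\}\times I_t$ as a boundary of the cone cylinder has to be compared with the transversal direction inside $\SM$ which induces the orientation on $V_0(\partial M)\times\{1\}\times I_t$ as a boundary of $V_0(M\setminus Z(U))\times\{1\}\times I_t$. A local-coordinate computation at a point of $\partial M$, using that the cone $\Con_{V_0}(S^\out Y)$ degenerates onto its apex as $x\to 1$, should confirm that the two induced orientations differ by a sign and produce the desired cancellation. The remaining identifications (caps at $t\in\{0,1\}$, the single occurrence of each $S_pM\times\{1\}\times I_t$, and the sign on the bases $S^\out Y\times\{0\}\times I_t$) follow directly from the definition of $\partial_{2n}$ combined with Proposition~\ref{mopc}.
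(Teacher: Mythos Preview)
The paper does not supply a proof of this proposition: it is stated and then immediately used in the Stokes argument that follows. Your stratum-by-stratum enumeration of the $2n$-dimensional boundary faces, together with the two cancellation mechanisms you isolate (the internal lateral faces via $\partial_{2n-2}M^\out=0$, and the matching at $x=1$ between cone apices and the lateral boundary of $V_0(M\setminus Z(U))\times\{1\}\times I_t$), is exactly the natural approach and is essentially what the author is tacitly relying on.

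One point to tighten. In your apex/lateral cancellation you write $\sum_Y V_0(Y)\times\{1\}\times I_t$ with $Y$ ranging over all of $\cF(M)$, but for a face $Y$ of codimension $l\geq 2$ the apex $V_0(Y)\times\{1\}\times I_t$ has dimension $2n-l+1<2n$ and therefore does not enter $\partial_{2n}$ at all; only the hyperfaces contribute a $2n$-dimensional apex. This is harmless for the conclusion, since $V_0(\partial M)$ agrees with $\bigcup_{\text{hyperfaces }Y} V_0(Y)$ up to sets of lower dimension, so the cancellation still goes through; but stating it this way makes the orientation comparison you flag considerably cleaner, because for a hyperface $Y$ the outward $x$-direction at the apex and the outward normal to $V_0(Y)$ inside $V_0(M)$ are visibly the two sides of the same codimension-one incidence.
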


In order to compute the integral on $M$ of the Pfaffian of $\nabla$, we are going to use the Pfaffian of the connection $\cD$. It is a closed form on $SM\times I^2$, hence by the Stokes formula on polyhedral complexes (Lemma \ref{thSt}), $\int_{\partial \cP}\Pf(R^\cD)=0$.
Moreover, $\Pf(R^\cD)$ vanishes identically on three of the types of faces of $\partial_{2n}\cP$ from \eqref{geomb}: It vanishes on 
$\Con_{V_0}(S^\out Y)\!\times\! \{0\}$ for $Y\in\cF(M)$ and on ${V_0}(M)\!\times\!\{1\}\!\times\! \{0\}$
because at $t=0$ the connection $\cD$ admits a parallel section $s$. It also vanishes
on $\Con_{V_0}(S^\out Y) \!\times\! \{1\}$
for $Y\in\cF(M)$ because along $\{t=1\}$ the connection $\cD$ is the pull-back of $\nabla$ from the base via the projection $\SM\!\times\! I\to M$, 
hence by functoriality the Pfaffian $\Pf(R^\cD)$ is a horizontal form. However, since $\dim(Y)<2n=\rk(TM)$, the Pfaffian of $\nabla$ vanishes on $Y$.

In conclusion of this discussion, by integrating $\Pf(R^\cD)$ on the polyhedral complexes from \eqref{geomb}, we get after using \eqref{ppt2} and 
pull-back by $V_0$:
\[\int_M \Pf(R^\nabla) = (2\pi)^n\chi(M) + \sum_{Y\in\cF(M)} \int_{S^\out Y\times I_t} \Pf(R^\cD).
\]
To conclude the proof, we note that the restriction of $\cD$ to $\{x=0\}$ coincides with the connection $D$ defined in \eqref{defD}. Moreover,
the computation of $\int_{S^\out Y\times I} \Pf(R^D)$, carried out above in the regular case, is local in the base $Y$, so it remains valid even 
without the regularity assumption on $M$.

\subsection{Odd dimensions}

This case follows directly from the even-dimensional case as we now explain.

\begin{theorem}\label{awo}
Let  $(N,g)$ be a compact Riemannian polyhedral manifold of odd dimension
$2n-1$. Then
\begin{align*}
(2\pi)^n\chi(N)=&\sum_{l=1}^{2n-1}\sum_{k=\left\lceil\frac{l-1}{2}\right\rceil}^
{n-1}
\frac{(-1)^{l-1}\pi (2k-1)!!}{(n-1-k)! (2k+1-l)!}\\
{}&\cdot
\sum_{Y\in \cF^{(l)}(N)} \int_{S^\out Y} \cB_Y\left[(R^Y-\tfrac{1}{2}
g(A,A))^{n-1-k} (A^*)^{2k+1-l}\right] |dg|.
\end{align*}
\end{theorem}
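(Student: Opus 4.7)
The plan is to deduce Theorem \ref{awo} by applying the even-dimensional Theorem \ref{awt} to the product $M := N \times [0,1]$ equipped with the product metric. This $M$ is a compact polyhedral manifold of dimension $2n$, with $\chi(M) = \chi(N)\cdot\chi([0,1]) = \chi(N)$, and the Pfaffian form vanishes identically on $M$ since $R^M$ is pulled back from $N$ and $\dim N = 2n-1$ forces $\omega_R^n = 0$ as a $2n$-form. Theorem \ref{awt} therefore reduces to an identity expressing $(2\pi)^n\chi(N)$ as a sum of contributions from the faces of $M$.

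These faces are of two kinds: cylindrical faces $Y\times[0,1]$ for $Y \in \cF^{(m)}(N)$ (of codimension $m$ in $M$), and cap faces $Y\times\{0\}$, $Y\times\{1\}$ for $Y \in \cF^{(m-1)}(N)$ (also of codimension $m$). For a cylindrical face the normal bundle, second fundamental form, and intrinsic curvature agree with those of $Y\subset N$, so the integrand $(R^{Y\times I} - \tfrac12 g(A,A))^{n-k}(A^*)^{2k-m}$ has its fiber part entirely in $\Lambda^*T^*Y$. Because $\dim Y = 2n-1-m$ is one less than $\dim(Y\times I) = 2n-m$, no $dt$-factor appears and the Berezin integral $\cB_{Y\times I}$ annihilates it. Hence cylindrical faces contribute nothing.

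For a cap face $Y\times\{0\}$ with $Y\in\cF^{(l)}(N)$ (so $m=l+1$), I would parametrize its outer sphere as the half-suspension
\[
w = \cos\theta\cdot v - \sin\theta\cdot\partial_t, \qquad v\in S^\out_y Y,\ \theta\in[0,\pi/2],
\]
whose induced volume element is $(\cos\theta)^{l-1}\,d\theta\otimes|dg|_{S^\out Y}$. Since $N\times\{0\}$ is totally geodesic in $M$, the second fundamental form satisfies $A^*(w) = \cos\theta\cdot A^*_{Y\subset N}(v)$ and $R^{Y\times\{0\}} = R^Y$. Collecting factors, the total exponent of $\cos\theta$ in the integrand becomes $(2k-l-1)+(l-1) = 2k-2$, and
\[
\int_0^{\pi/2}(\cos\theta)^{2k-2}\,d\theta = \frac{\pi}{2}\cdot\frac{(2k-3)!!}{(2k-2)!!}.
\]
The two caps $Y\times\{0\}$ and $Y\times\{1\}$ contribute equally by the symmetry $t\mapsto 1-t$, absorbing the factor $\tfrac12$. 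The degenerate case $Y=N$ yields nothing: the face is totally geodesic, so $(A^*)^{2k-1}=0$ for $k\geq 1$.

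The final step is combinatorial. Setting $k':= k-1$ and using $(2k-2)!! = 2^{k-1}(k-1)!$, the even-dimensional coefficient $(-1)^{l+1}2^{k-1}(k-1)!/[(n-k)!(2k-l-1)!]$ multiplied by $\pi(2k-3)!!/(2k-2)!!$ simplifies to
\[
\frac{(-1)^{l-1}\pi(2k'-1)!!}{(n-1-k')!\,(2k'+1-l)!},
\]
which is precisely the coefficient in Theorem \ref{awo}; the bounds $\lceil(l+1)/2\rceil\le k\le n$ translate to $\lceil(l-1)/2\rceil\le k'\le n-1$. The main obstacle is this bookkeeping of coefficients and summation ranges; the geometric input is elementary because the product structure of $M$ decouples the $N$ and $[0,1]$ directions and reduces everything to the half-suspension calculation.
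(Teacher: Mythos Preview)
Your proposal is correct and follows essentially the same route as the paper: apply Theorem \ref{awt} to $M=N\times[0,1]$, discard the cylindrical faces because the integrand misses the $dt$-component of the volume form, parametrize the outer spheres of the cap faces as spherical cones over $S^\out Y$, and integrate out the cone variable using the Wallis-type integral. The only cosmetic difference is that the paper combines the two caps into a single integral $\int_{-\pi/2}^{\pi/2}\cos^{2k}\alpha\,d\alpha$ (after reindexing $k\mapsto k-1$ first), whereas you compute one cap over $[0,\pi/2]$ and double; your coefficient bookkeeping is correct.
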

By convention, $(-1)!!=0!=0!!=1$, and the $0^{\text{th}}$ power of a double form
is always $1$.
\begin{proof}
Apply theorem \ref{awt} to the product manifold
$M:=N\times I$, where $I$ is the interval $[0,1]$, endowed with the product
metric $h=g+dt^2$. The Euler characteristics of $M$ and $N$ coincide.
We will exploit the fact that 
the vertical vector field $\frac{\partial}{\partial t}$ is parallel along $M$,
but also along $Y\times I$ for every face $Y$ of $N$.

The boundary faces of $M$ fall into two types: 
\begin{itemize}
\item lateral faces of the form
$Y\times I$, and 
\item top or bottom faces of the form $Y\times\{0\}$ or $Y\times
\{1\}$. 
\end{itemize}

The first type of faces do not contribute in the Gauss-Bonnet formula.
Indeed, the curvature form $R^{Y\times I}$ and the second fundamental form 
$A^{Y\times I}$ of ${Y\times I}$ inside $N\times I$ both vanish in the
direction of the parallel vector field $\frac{\partial}{\partial t}$.
It follows that the Berezin integral inside the term from theorem \ref{awt} 
corresponding to the face $Y\times I$ vanishes identically.

The outer spheres in $M$ of the second type of faces, e.g.\ $Y\times\{0\}$, can
be
described as spherical cones
over the outer sphere of $Y$ in $N$.
More precisely, let $V$ be an Euclidean vector space, $S$ the unit sphere in
$V$, $V'$
a hyperplane in $V$ and $A\subset S\cap V'$ a subset of $S$ lying in a subsphere
of codimension $1$.
Let $\{p_0,p_1\}= {V'}^\perp\cap S$, so $p_0$ and $p_1$ are diametrally opposed
and $A$
sits in the equatorial hypersphere orthogonal to $p_0$ and $p_1$.
We define the \emph{spherical cone} of $A$ with respect to $p_0$ as the union of
all geodesic segments in $S$
linking $p_0$ to $A$. The complement of the vertex $p_0$, the \emph{open
spherical cone}, is isometric to a topological product
$A\times [0,\pi/2)$ with the warped product metric $\cos^2(\alpha)
g_A+d\alpha^2$. Note for later use that the volume densities induced by $g$ and
$g_A$ on the fibers of the outer spheres satisfy the identity
\begin{equation}\label{svf}
|dg|= \cos(\alpha)^{\dim(V)-2} d\alpha |dg_A|.
\end{equation}

\begin{lem}
Let $S^\out (Y)$ be the outer sphere of $Y$ in $N$, and $S^\out (Y\times \{0\})$
the outer sphere of $Y\times \{0\}$
in $M$. Then for every $x\in Y$, $S^\out_x (Y\times \{0\})$ is the spherical
cone of $S^\out_x (Y)$ 
with respect to the point $\frac{\partial}{\partial t}$. Moreover, the 
union for all $x\in Y$ of the open
spherical cones form a locally trivial bundle with fiber type $[0,\pi/2)$ over
$S^\out (Y)$
\end{lem}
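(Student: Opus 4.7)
The plan is to compute the outer cone at $(x, 0) \in Y \times \{0\}$ using the product structure of $M = N \times I$, then identify the outer sphere as a spherical cone. The tangent space decomposes orthogonally as $T_{(x,0)}M = T_xN \oplus \bR\partial_t$, since the metric is $g+dt^2$. A smooth curve $c:[0,\epsilon)\to M$ with $c(0)=(x,0)$ splits as $(c_N,c_I)$, a smooth curve in $N$ from $x$ together with a smooth curve in $[0,1]$ from $0$; their tangents at $0$ are, respectively, an interior vector at $x$ in $Y\subset N$ and a non-negative multiple of $\partial_t$. Hence the interior cone at $(x,0)$ in $Y\times\{0\}$ is the direct sum of the interior cone at $x$ in $Y$ with the ray $[0,\infty)\partial_t$.

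Since the polar of a direct sum of cones (under the product inner product) is the direct sum of the polars, we obtain
\[
C^\out_{(x,0)}(Y\times\{0\}) = C^\out_x Y \oplus (-\infty,0]\partial_t.
\]
A unit vector $(w, r\partial_t)$ in this cone, with $w\in C^\out_xY$, $r\leq 0$, $|w|^2+r^2=1$, admits the unique parametrization $\cos(\alpha)v-\sin(\alpha)\partial_t$ given by $\cos(\alpha)=|w|$, $\sin(\alpha)=-r$, $v=w/|w|\in S^\out_xY$, for $\alpha\in[0,\pi/2]$ (with the vertex $-\partial_t$ occurring at $\alpha=\pi/2$). This is exactly the geodesic parametrization of the spherical cone of $S^\out_xY$ with vertex $-\partial_t$; since $S^\out_xY$ lies in the equatorial hypersphere orthogonal to $\partial_t$, the sign of the vertex $\pm\partial_t$ is a matter of convention. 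A direct computation shows that the metric induced from the unit sphere of $T_{(x,0)}M$ is $\cos^2(\alpha)g_{S^\out_xY}+d\alpha^2$, matching the warped-product description of a spherical cone. The open spherical cone corresponds to $\alpha\in[0,\pi/2)$.

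Finally, applied fiberwise over $Y$, the same formula defines a smooth bijection
\[
\Psi: S^\out(Y)\times[0,\pi/2)\longrightarrow\bigsqcup_{x\in Y}(\text{open spherical cone at }(x,0)),\qquad (v,\alpha)\mapsto \cos(\alpha)v-\sin(\alpha)\partial_t,
\]
which globally trivializes the projection to $S^\out(Y)$, yielding a (globally, hence locally) trivial bundle with fiber $[0,\pi/2)$. The only conceptual step is the product decomposition of the interior cone in the first paragraph; once that is in hand, the rest is polar duality and elementary trigonometry.
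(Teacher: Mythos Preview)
The paper states this lemma without proof, so there is no argument to compare against; your proof fills the gap correctly. The decomposition of the interior cone as a product, the polar-duality step (which uses that $0$ lies in each factor cone, so the polar of the product is the product of the polars), and the spherical-cone parametrization are all sound. One small remark: at $Y\times\{0\}$ the outward direction is $-\partial_t$, as you compute, so the paper's phrasing ``with respect to the point $\partial/\partial t$'' is off by a sign; you were right to flag it. Your trivialization $\Psi$ is in fact a diffeomorphism (the inverse is visibly smooth), so you get global, not merely local, triviality---stronger than what the lemma asserts, and correct because $\partial_t$ is a global parallel section.
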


Using this lemma, we can carry out the integral in $\alpha$ (i.e., along the
fibers 
of the spherical cone fibration) of the integrands from theorem \ref{awt}. For a
fixed $x\in Y$, the curvature  $R^Y$ and the metric contraction of the second 
fundamental form $g(A,A)$ are pull-backs from the base, i.e., they are constant
on the 
outer sphere. The dual $A^*$ of the second fundamental form is linear on the
normal bundle to $Y$
and vanishes at the vertical vector $\partial_t$, hence for $v\in S^\out_xY$,
\[
A^*(\cos(\alpha) v+\sin(\alpha)\partial_t) = \cos(\alpha) A^*(v).
\]
Now the volume form on the spherical cones is given by \eqref{svf}. It follows
that the push-forward
along the fibers of the spherical cones over $S^\out_xY$ of 
$\cB_Y\left[(R^Y-\tfrac{1}{2}
g(A,A))^{n-1-k} (A^*)^{2k+1-l}\right] |dg_M|$
amounts to 
\[
I_{2k}\cB_Y\left[(R^Y-\tfrac{1}{2}
g(A,A))^{n-1-k} (A^*)^{2k+1-l}\right] |dg_N|
\]
where $I_{2k}$ is a scaling factor independent of $x$:
\[
I_{2k}= \int_{-\pi/2}^{\pi/2}\cos^{2k}(\alpha)d\alpha = \frac{\pi
(2k-1)!!}{(2k)!!}.\qedhere
\]
\end{proof}
We have used the convention $0!=0!!=(-1)!!=1$.

\section{Constant-curvature polyhedral manifolds with geodesic faces}

By applying the Gauss-Bonet theorems \ref{awt} and \ref{awo} to the case of 
polyhedral 
manifolds of constant sectional curvature with totally geodesic faces, we 
obtain certain identities for spherical, euclidean and hyperbolic polyhedra in terms of volumes
of faces and measures of outer angles. 

\subsection{Euclidean polyhedra}
Let $M$ be a flat compact polyhedral manifold of dimension $k$ with totally
geodesic faces.
In this case, the Gauss-Bonnet simply states that the sum of the outer angles at
the vertices of $M$
equals the Euler characteristic $\chi(M)$ 
divided by the volume of the $k-1$ sphere.
Indeed, in Theorems \ref{awt} and \ref{awo} the curvature $R^Y$ of the face $Y$ 
and the second fundamental form $A$ of $Y\subset M$ both vanish, so the only
non-zero terms in the 
right-hand side arises for $\codim(Y)=k$, i.e., when $Y$ is a point. In that
case, the integral corresponding to
a vertex $Y$ gives the volume of the outer sphere at the vertex $Y$, and the
formula becomes
\[\vol(S^{k-1})\chi(M) = \sum_{Y\in\cF^{(k)}} \angle^\out Y.
\]
In particular, the Euler characteristic of a flat compact polyhedral manifold 
with totally geodesic faces is always non-negative, and it is necessarily 
positive as soon as $M$ has at least one vertex.
This identity is clear for open polytopes in 
$\bR^{k}$, since the outer spheres of the vertices 
partition the unit sphere $S^{k-1}$ into spherical polytopes with mutually
disjoint interiors. But in general it is not obvious. A direct proof should 
rely on some additivity property of outer angles. 

\subsection{Manifolds of constant sectional curvature with geodesic faces}

Let $(M,g)$ be a compact polyhedral manifold with constant scalar curvature 
$\kk$ and with geodesic faces. Then $R=\frac{\kk}{2}g^2$, valid on every
face. Since the second fundamental form $A$ of any face $Y$ is assumed
to vanish, we also have $A^*=0$. Therefore Theorems \ref{awt} and \ref{awo} give
\begin{align*}
(2\pi)^n\chi(M)= {}&\sum_{k=0}^{n}
\tfrac{2^{k-1} (k-1)!}{(n-k)!}
\sum_{Y\in \cF^{2k}(M)} \int_{S^\out Y} \cB_Y\left[(R^Y)^{n-k} \right] |dg|&
\text{for $\dim(M)=2n$},\\
2^n\pi^{n-1}\chi(N)={}&\sum_{k=1}^{n}
\tfrac{(2k-3)!!}{(n-k)!}
\sum_{Y\in \cF^{2k-1}(N)} \int_{S^\out Y} \cB_Y\left[(R^Y)^{n-k}\right] |dg|&
\text{for $\dim(N)=2n-1$}
\end{align*}
where we recall that $\cF^{(h)}$ denotes the set of faces of codimension $h\geq 0$.
On a face $Y$ of dimension $2j$, we compute moreover for $R=R^Y$
\begin{align*}R^j=\kk^j \frac{(2j)!}{2^j} dg_Y\otimes dg_Y,&& \cB_Y(R^j)=\kk^j
\frac{(2j)!}{2^j} |dg_Y|.
\end{align*}
In conclusion, regardless of the parity of $d=\dim(M)$, the Gauss-Bonnet formula
becomes the sum 
\eqref{gbcc} over the even-dimensional faces of $M^d$ advertised in the
introduction as Theorem \ref{th1}.

Dehn \cite{deh} studied such identities for small dimensions and predicted their
existence in general. 
Allendoerfer-Weil's formula from \cite{AW} was used by Santal\'o \cite{san} for
deducing particular cases of \eqref{gbcc} for polyhedra embedded in a constant
curvature space-form. For a spherical simplex inside $S^k$, the identity was
also announced by Kenzi Sato \cite{sat}.

\subsection{Hyperbolic polyhedra with ideal vertices}
As an extension of the previous example, the hyperbolic identity allows us to
compute the volume of hyperbolic $2n$-polyhedra
with some -- or all -- ideal vertices in terms of outer angles and volumes of
lower-dimensional faces, by passing to the limit the Gauss-Bonnet formula for compact 
polyhedra.
For instance, when $2n=4$ the volume of an ideal hyperbolic $4$-simplex is given
by
\[\vol(M)=-2\pi^2+\frac{\pi}{3}\sum_{Y\in\cF^{(2)}(M)} \angle^\out(Y)\]
where $\angle^\out(Y)$ is the outer dihedral angle of the ideal triangle $Y$ in
$M$, i.e., the angle between the outer normals to the two
hyperfaces containing $Y$. 

\section{Closing remarks}

The Gauss-Bonnet formula \eqref{gbf} 
continues to hold on complete manifolds with warped product
ends with a decay condition on the warping functions \cite{Ro}, and for asymptotically 
cylindrical metrics \cite{Alb}. If $(M,g)$ is a smooth compact Riemannian manifold-with-boundary, 
the Gauss-Bonnet formula contains a correction term along the boundary 
in terms of the second fundamental form \cite{AW,Ch1}. Extensions of this
formula to more general metrics on the interior of a manifold-with-boundary 
were found by Satake \cite{satak} for Riemannian orbifolds, by Albin \cite{Alb}, 
Dai-Wei \cite{DW} and by Cibotaru and the author
\cite{CibM} for manifolds with fibered boundaries, by McMullen \cite{McM}
for cone manifolds, by Anderson \cite{and} for asymptotically hyperbolic Einstein $4$-manifolds,
and again in \cite{CibM} for incomplete edge metrics, to cite only a few results in this direction. 
The proofs typically start from a degeneration process in the Gauss-Bonnet formula for 
manifolds-with-boundary. 

In contrast, the Gauss-Bonnet formula on a Riemannian
polyhedral manifold does not seem to follow from such a degeneration. Although
it may appear tempting to consider a $\epsilon$-neighborhood of $M$ 
as a $C^1$-smoothing of the boundary and then try to compute the limit of the 
boundary integrand as $\epsilon\to 0$ by interpreting the smoothed boundary
as a current like in \cite{Cib}, we were not able to isolate with that approach
the contributions of lower-dimensional faces. 

\subsubsection*{Acknowledgements}
The author was partially supported from the UEFISCDI grant PN-III-P4-ID-PCE-2020-0794 
``Spectral Methods in Hyperbolic Geometry".


\begin{thebibliography}{99}
\bibitem{Alb} 
P.~Albin, 
\emph{A renormalized index theorem for some complete
asymptotically regular metrics: The Gauss--Bonnet theorem}, 
Adv.\ Math.\ {\bf 213} (2007), no.\ 1, 1--52.
\bibitem{Alle} 
C.~Allendoerfer, 
\emph{The Euler number of a Riemannian manifold}, 
Amer.\ J.\ Math.\ {\bf 62} (1940), 243--248.
\bibitem{AW} 
C.~Allendoerfer, A.~Weil, 
\emph{The Gauss-Bonnet Theorem for Riemannian Polyhedra}, 
Trans.\ AMS, {\bf 53} (1943), no.\ 1, 101--129.
\bibitem{and}
M.~T.~Anderson, 
\emph{$L^2$ curvature and volume renormalization of AHE metrics on 4-manifolds}, 
Math.\ Res.\ Lett.\ {\bf 8} (2001), no.\ 1--2, 171--188.
\bibitem{bero}
M.~Beck, S.~Robins, 
\emph{Computing the continuous discretely: integer-point enumeration in polyhedra}, 
Springer Undergrad.\ Texts Math, 2007.
\bibitem{Ch1} 
S.~S.~Chern, 
\emph{A simple intrinsic proof of the Gauss-Bonnet
formula for closed Riemannian manifolds}, 
Ann.\ Math.\ {\bf 45} (1944), 747--752.
\bibitem{Ch2} 
S.~S.~Chern, 
\emph{On the curvature integra in a Riemannian manifold}, 
Ann.\ Math.\ {\bf 46} (1945), no.\ 4, 674--684.
\bibitem{Cib}
D.~Cibotaru,
\emph{Chern-Gauss-Bonnet and Lefschetz duality from a currential point of view},
Adv.\ Math.\ {\bf 317} (2017), 718--757.
\bibitem{CibM}
D.~Cibotaru, S.~Moroianu,
\emph{Odd Pfaffian forms},
Bull.\ Braz.\ Math.\ Soc.\ (2021). https://doi.org/10.1007/s00574-020-00239-0
\bibitem{DW} 
X.~Dai, G.~Wei, 
\emph{ Hitchin-Thorpe inequality for noncompact Einstein 4-manifolds},
Adv.\ Math.\ {\bf 214} (2007), no.\ 2, 551--570.
\bibitem{deh}
M.~Dehn,
\emph{Die Eulersche Formel im Zusammenhang mit dem Inhalt in der
Nicht-Euklidischen Geometrie},
Math.\ Ann.\ {\bf 61} (1906), 561--586.
\bibitem{Hop} 
H.~Hopf, 
\emph{\"Uber die Curvatura integra geschlossener Hyperfl\"achen}, 
Math.\ Ann.\ {\bf 95} (1925), 340--376.
\bibitem{McM} 
C.~McMullen, 
\emph{The Gauss Bonnet theorem for cone manifolds and volumes of moduli spaces}, 
Amer.\ J.\ Math.\ {\bf 139} (2017), 261--291.
\bibitem{Mel}
R.~B.~Melrose, 
\emph{Analysis on manifolds with corners}, book in preparation.
\bibitem{nas}
 J.~ Nash, 
 \emph{The Imbedding Problem for Riemannian Manifolds},
Ann.\ Math {\bf 63}, no.\ 1 (1956), 20--63.
\bibitem{Ro} 
S.~Rosenberg, 
\emph{On the Gauss-Bonnet theorem for complete manifolds}, 
Trans.\ AMS, {\bf 287}, (1985) no.\ 2, 745--753.
\bibitem{san}
L.~Santal\'o, 
\emph{Sobre la formula de Gauss-Bonnet para poliedros en espacios
de curvatura constante},
Rev.\ Un.\ Mat.\ Argentina {\bf 20} (1962), 79--91.
\bibitem{satak}
I.~Satake, 
\emph{The Gauss-Bonnet Theorem for V-manifolds}, 
J.\ Math.\ Soc.\ Japan {\bf 9} (1957), no.\ 4, 464--492.
\bibitem{sat}
K.~Sato, \emph{Solid angles of high dimensional spheres}, 
Kyoto U. Departmental bulletin paper (2006), 1503:129--131.
\bibitem{wal} 
R.~Walter,
\emph{A generalized Allendoerffer-Weil formula and an inequality of the
Cohn-Vossen type},
J.\ Differential Geometry {\bf 10} (1975), 167--180.
\bibitem{Wep}
H.\ Weyl, 
{Elementare Theorie der konvexen Polyeder}, 
                 Comment.\ Math.\ Helvetici {\bf 7} (1935), 290--306.
\end{thebibliography}
\end{document}